\newtheorem{theorem}{Theorem}[section]
\theoremstyle{plain}
\newtheorem{corollary}{Corollary}[section]
\newtheorem{definition}{Definition}[section]
\newtheorem{example}{Example}[section]
\newtheorem{proposition}{Proposition}[section]
\newtheorem{remark}{Remark}[section]
\numberwithin{equation}{section}
\newcommand{\N}{\mathbb{N}}
\newcommand{\Q}{\mathbb{Q}}
\newcommand{\R}{\mathbb{R}}
\newcommand{\X}{\mathbb{X}}
\newcommand{\Y}{\mathbb{Y}}
\newcommand{\Z}{\mathbb{Z}}
\newcommand{\cB}{\mathcal{B}}
\newcommand{\cC}{\mathcal{C}}
\newcommand{\cF}{\mathcal{F}}
\newcommand{\cH}{\mathcal{H}}
\newcommand{\sfX}{\mathsf{X}}
\newcommand{\fL}{\mathfrak{L}}
\newcommand{\fC}{\mathfrak{C}}
\newcommand{\fX}{\mathfrak{X}}
\newcommand{\be}{\begin{equation}}
\newcommand{\ee}{\end{equation}}
\newcommand{\inn}[2]{{\langle #1,#2 \rangle}}
\newcommand{\n}[1]{{\lVert #1 \rVert}}
\newcommand{\tn}[1]{{\left\vert\kern-0.25ex\left\vert\kern-0.25ex\left\vert #1 
    \right\vert\kern-0.25ex\right\vert\kern-0.25ex\right\vert}}
\newcommand{\abs}[1]{{\left| #1 \right|}}
\newcommand{\bl}{{\boldsymbol{l}}}
\newcommand{\bS}{{\boldsymbol{S}}}
\newcommand{\bq}{{\boldsymbol{q}}}
\newcommand{\nl}{\vskip 5pt\noindent}
\DeclareMathOperator{\id}{id}
\DeclareMathOperator{\Lip}{Lip}
\DeclareMathOperator{\Hom}{Hom}
\DeclareMathOperator*{\esup}{{ess\,sup}}
\DeclareMathOperator{\graph}{graph}
\title{Semi-Rings, Semi-Vector Spaces, and Fractal Interpolation}
\author{Peter R. Massopust}
\address{Department of Mathematics, Technical University of Munich, Garching by Munich, Germany}
\email{massopust@ma.tum.de}
\begin{document}

\maketitle

\begin{abstract}
In this paper, we introduce fractal interpolation on complete semi-vector spaces. This approach is motivated by the requirements of preservation of positivity or monotonicity of functions for some models in approximation and interpolation theory. The setting in complete semi-vector spaces does not requite additional assumptions but is intrinsically built into the frame work. For the purposes of this paper, fractal interpolation in the complete semi-vector spaces $C^+$ and $L_p^+$ is considered.
\end{abstract}
\section{Introduction}
This paper introduces fractal interpolation and solutions of the fractal interpolation problem in complete semi-vector spaces. This approach is motivated by the necessity in approximation theory to preserve positivity, monotonicity, or convexity of functions $f$. In each case, $\pm f^{k} \geq 0$, for $k=0,1,2$, respectively. Fractal interpolation in these situations have been carried out but needed additional requirements. The interested reader may find numerous articles to this behalf in the fractal literature.

The convenient setting of semi-vector spaces provides a means of only considering positive functions from the onset. Once it has been established that certain such semi-vector spaces are complete, they lend themselves to fractal interpolation and the application of the Banach fixed point theorem.

The structure of this paper is as follows. For the reader not familiar with the setting of semi-rings, semi-vector spaces, and related topics, we review and introduce these concepts in Section 2. This section forms the foundation for the remainder of this article. The two types of complete semi-vector spaces, namely $C^+I$ (continuous positive functions on a compact interval $I\subset\R$) and $L_p^+I$ (classes of positive functions in the Lebesgue spaces $L_p^+I$, $p\in [1,\infty]$) are defined here. For the sake of completeness, self-containment of the presentation, and the underlying foundational scope of fractal interpolation, we also provide the reader with the concepts of iterated function systems and fractal interpolation in Section 3. There, the fundamental object of a Read-Bajaktarevi\`c operator is introduced. In Section 5, we describe fractal interpolation in the complete semi-vector spaces $C^+I$ and $L_p^+I$ and derive conditions that guarantee the existence of fractal functions in each of these two spaces. We briefly dwell on fractal sequences and series in $C^+I$ and $L_p^+I$. The section closes by describing a semi-linear fractal operator on each of these two spaces and presenting some of its properties. A conclusion and future-research-directions complete the paper.
\section{Semi-Rings, Semi-Vector Spaces and Related Results}
In this section, we introduce those concepts from the areas of semi-rings and semi-vector spaces that are relevant for this paper. The interested reader is referred to \cite{LCLPZB,PS} and the references given therein for more details. The definitions and examples presented in this section can be found in \cite{GG,JMV,LCLPZB}.
\subsection{Definitions and Examples}
\begin{definition}
A set nonempty set $S$ endowed with two binary operations $+ :S\times S\to S$ (addition) and $\ast : S\times S\to S$ (multiplication) is called a semi-ring if
\begin{enumerate}
\item[(i)] $(S,+)$ is a commutative monoid;
\item[(ii)] $(S, \ast)$ is a semigroup;
\item[(iii)] multiplication $\ast$ is distributive with respect to $+$, i.e., $\forall\, x,y,z\in S$, $(x+y)\ast z = x\ast y + x\ast z$ and $x\ast (y+z) = x\ast y + x\ast z$.
\end{enumerate}
\end{definition}
To simplify notation, we write $S$ for $(S, +,\ast)$ unless it is necessary to avoid ambiguities. If multiplication $\ast$ is commutative then $S$ is called a commutative semi-ring, and if there exists a $1\in S$ such that $\forall\, x\in S$, $x\ast 1 = 1 \ast x = x$, then $S$ is called a semi-ring with (multiplicative) identity $1$. 
\begin{definition}
A semi-field is an ordered triple $K:=(K,+,\ast)$ which is a commutative semi-ring with $1$ satisfying the conditions
\begin{enumerate}
\item[(i)] $\forall\,x,y\in K$, $x+y=0$ implies $x=y=0$;
\item[(ii)] there are no multiplicative zero divisors in $K$.
\end{enumerate}
\end{definition}
\begin{example}
An example of a semi-field is the set $\R_0^+ :=\{x\in \R: x\geq 0\}$ endowed with the usual addition and multiplication of real numbers.
\end{example}
Now, we are ready to introduce the concept of a semi-vector space over a semi-field $K$.
\begin{definition}\label{def1}
Let $V$ be a nonempty set endowed with the binary operations $+:V\times V\to V$ (vector addition) and $\cdot: K\times V\to V$ (scalar multiplication). Then the ordered triple $(V,+,\cdot)$ is called a semi-vector space over the semi-field $K$ provided that
\begin{enumerate}
\item[(i)] $(V,+)$ is a commutative monoid satisfying the additive cancellation law $\forall\, u,v,w\in V$, $u+v = u+w$ implies $v = w$;
\item[(ii)] $\forall\,\alpha\in K\,\forall\,u,v\in K$: $\alpha(u+v) = \alpha u +\alpha v$;
\item[(iii)] $\forall\,\alpha,\beta\in K\,\forall\,v\in V$: $(\alpha + \beta)v = \alpha v + \beta v$;
\item[(iv)] $\forall\,\alpha,\beta\in K\,\forall\,v\in V$: $(\alpha\beta)v = \alpha(\beta v)$;
\item[(v)] $\forall\,v\in V$ and $1\in K$: $1\cdot v = v$.
\end{enumerate}
\end{definition}
To ease notation, we write $V$ for $(V,+,\cdot)$ unless this gives rise to confusion.
We remark that the semi-vector spaces introduced here are called \textit{regular} as they are required to satisfy Definition \ref{def1}(i). The (unique) element $0_V\in V$ with $v+0_V = v+0_V = v$, $\forall\,v\in V$ will be referred to as the zero or null vector of $V$.

Let $0_V\neq v\in V$. If $\exists u\in V$ such that $v+u = 0_V$ then $v$ is called \emph{symmetrizable}. A semi-vector space $V$ is called \emph{simple} if the only symmetrizable element is the zero vector $0_V$.

\begin{definition}\label{def2}
Suppose $V$ is a simple semi-vector space over the semi-field $\R_0^+$. A nonempty subset $B\subset V$ is called a semi-basis of $V$ if every $0_V\neq v\in V$ has a unique representation in the form $v = \sum\limits_{i\in I(v)} v\^{i} b_i$ where $v\^{i}\in \R^+ := \{x\in \R: x > 0\}$, $b_i\in B$ and $I(v)$ is a finite index set uniquely determined by $v$. The finite subset $B_v\subset B$ given by $B_v := \{b_i : i\in I(v)\}$ is uniquely determined by $v$. A semi-vector space is called semi-free if it admits a semi-basis. 
\end{definition}
A direct consequence of Definition \ref{def2} is the following corollary whose proof can be found in \cite[Corollary 1.7]{JMV}.
\begin{corollary}
All semi-bases in a semi-free semi-vector space $V$ have the same cardinality. This cardinality is called the semi-dimension of $V$.
\end{corollary}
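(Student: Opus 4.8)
The plan is to show that any two semi-bases $B$ and $C$ of $V$ are matched by a bijection, from which equality of cardinalities is immediate. The guiding principle is that, in contrast to the classical theory of vector spaces, no additive inverses are available: by simplicity the only symmetrizable element is $0_V$, so there is no Steinitz-type exchange argument using subtraction. Instead I would exploit the fact that all coefficients in a semi-basis representation lie in $\R^+$ and are therefore \emph{strictly} positive, so that a sum of nonzero multiples of basis vectors can never cancel. This rigidity is precisely what forces the two semi-bases to correspond one-to-one.

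First I would record a basic rigidity lemma: distinct semi-basis elements cannot be positive scalar multiples of one another. Indeed, if $b = \mu\, b'$ with $\mu \in \R^+$ and $b,b' \in B$, then $1\cdot b$ and $\mu\, b'$ are two single-term representations of $b$ in the semi-basis $B$; by the uniqueness clause of Definition \ref{def2} this forces $b = b'$ and $\mu = 1$.

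The heart of the argument would be the following computation. Fix $b \in B$ and expand it in the other semi-basis $C$, say $b = \sum_{j \in J} \beta_j c_j$ with all $\beta_j \in \R^+$. Expanding each $c_j$ back in $B$ as $c_j = \sum_{i \in I_j} \gamma_{j}^{(i)} b_i$ and substituting yields $b = \sum_{j \in J}\sum_{i \in I_j} \beta_j \gamma_j^{(i)} b_i$. Collecting the coefficient of each $b_i$ produces a representation of $b$ in $B$ with strictly positive coefficients, since every product $\beta_j \gamma_j^{(i)}$ is positive and no cancellation can occur. Comparing with the trivial representation $b = 1\cdot b$ and invoking uniqueness, every basis vector $b_i \neq b$ must be absent, so each $c_j$ (for $j \in J$) is a positive multiple of $b$ alone. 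By the rigidity lemma no two distinct elements of $C$ can be positive multiples of the same $b$, hence $|J| = 1$ and $b = \beta\, c$ for a single $c \in C$ and a single $\beta \in \R^+$. Assembling this, the previous step defines a map $\phi \colon B \to C$ sending $b$ to the unique such $c$; well-definedness again follows from uniqueness of representation in $C$. Running the symmetric argument with the roles of $B$ and $C$ interchanged gives $\psi \colon C \to B$, and since $b = \beta c$ implies $c = \beta^{-1} b$ with $\beta^{-1} \in \R^+$, the maps $\phi$ and $\psi$ are mutually inverse. Hence $|B| = |C|$ and the common cardinality is well defined.

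I expect the main obstacle to be justifying the ``no cancellation'' step cleanly: one must argue that after substitution the collected coefficients genuinely yield \emph{the} unique $B$-representation of $b$, which requires that the collected sum is still indexed by a finite set and carries strictly positive coefficients so that Definition \ref{def2} applies to it. This is exactly where the hypotheses that $V$ is simple and that the coefficients lie in $\R^+$ rather than in $\R_0^+$ are indispensable.
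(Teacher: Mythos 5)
Your proof is correct. Note, however, that the paper does not actually prove this corollary: it defers entirely to the citation \cite[Corollary 1.7]{JMV}, so you have supplied a self-contained argument where the paper gives none. Your route --- expand $b\in B$ in $C$, re-expand each $c_j$ in $B$, collect coefficients using $(\alpha+\beta)v=\alpha v+\beta v$, and invoke uniqueness against the trivial representation $b=1\cdot b$ to conclude that each basis element of one semi-basis is a positive scalar multiple of a unique element of the other --- is essentially the standard argument behind the cited result, and it has the pleasant feature that the bijection $\phi\colon B\to C$ is constructed elementwise, so it works uniformly for all cardinalities, with no separate finite/infinite case split as in the classical vector-space dimension theorem. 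Two small points of hygiene: first, you should observe that $0_V$ cannot belong to a semi-basis (adjoining a term $\lambda\cdot 0_V=0_V$ would produce a second representation with a different set $B_v$), which is what licenses expanding each $c_j$ in $B$ at all; second, your closing claim that simplicity is ``indispensable'' for the no-cancellation step is slightly off --- the collection step needs only the strict positivity of the coefficients in $\R^+$ together with the uniqueness clause of Definition \ref{def2}, while simplicity enters only as a standing hypothesis under which Definition \ref{def2} is formulated. Neither point affects the validity of the argument.
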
 
The example below list some semi-vector spaces. (See, for instance, also: \cite{LCLPZB}
\begin{example}\hfill
\begin{enumerate}
\item All vector spaces are semi-vector spaces. They are, however, not simple.
\item Let $n\in \N:= \{n\in \Z: n\geq 1\}$. The set $(\R_0^+)^n := \underset{i = 1}{\overset{n}{\sfX}}\R_0^+$ together with the usual component-wise addition and scalar multiplication is a semi-vector space over the semi-field $\R_0^+$, called the Euclidean semi-vector space.
\item For $m,n\in\N$, denote by $M_{n\times m}(\R_0^+)$ the set of all $n\times m$ matrices with entries from $\R_0^+$. Equipped with the usual matrix addition and matrix scalar multiplication, $M_{n\times m}(\R_0^+)$ forms a semi-vector space over $\R_0^+$.
\item Let $n\in \N$ and let $P_n [x]$ denote the set of all polynomials with coefficients from $\R_0^+$ together with the usual addition and scalar multiplication   is a semi-vector space over $\R_0^+$.
\end{enumerate}
\end{example}
\begin{definition}
A nonempty subset $U$ of a semi-vector space $(V,+,\cdot)$ over $\R_0^+$ is called a semi-subspace of $V$ if $U$ satisfies
\begin{enumerate}
\item[(i)]	$\forall\, u_1, u_2\in U$, $u_1+u_2\in U$;
\item[(ii)]	$\forall\,\alpha\in \R_0^+\,\forall\,u\in U$, $\alpha u\in U$.
\end{enumerate} 
\end{definition}
We note that the uniqueness of $0_V$ and the regularity of the semi-vector space $V$ imply that every semi-subspace $U$ of $V$ contains the zero vector.
The following are some examples of semi-subspaces.
\begin{example}\hfill
\begin{enumerate}
\item Let $\Q_0^+ := \{q\in \Q: q\geq 0\}$. The semi-vector space $\Q_0^+$ over $\Q_0^+$ is a semi-subspace of $\R_0^+$ over $\Q_0^+$.
\item The set of diagonal matrices in $M_{n\times n}(\R_0^+)$ is a semi-subspace of the latter.
\end{enumerate}
\end{example}
\begin{definition}\label{def3}
Let $V$ and $W$ be semi-vector spaces over $\R_0^+$ and let $T:V\to W$ be a map. $T$ is called a semi-linear transformation between $V$ and $W$ if
\begin{enumerate}
\item[(i)]	$\forall\,v_1, v_2\in V$: $T(v_1+v_2) = Tv_1 + Tv_2$;
\item[(ii)]	$\forall\,\lambda\in\R_0^+\,\forall\,v\in V$: $T(\lambda v) = \lambda Tv$.
\end{enumerate}
In case $W = V$, we call $T$ a semi-linear operator on $V$.
\end{definition}
\begin{remark}
The expression \emph{semi-linear} defined in Definition \ref{def3} is not to be confused with the one used in linear algebra and projective geometry where condition (ii) has the form: $\forall\,\lambda\in\R_0^+\,\forall\,v\in V$ it holds that $T(\lambda v) = \sigma(\lambda) Tv$, for some automorphism $\sigma \neq \id$ of the underlying field.
\end{remark}
\begin{definition}
Let $V$ and $W$ be semi-vector spaces over $\R_0^+$. A mapping $T:V\to W$ is called an affine semi-linear transformation if $T - T(0)$ is a semi-linear transformation $V\to W$.
\end{definition}
We note that the set $\Hom^+(V,W) := \{T:V\to W: T \textrm{ is semi-linear}\}$ is a semi-vector space over $\R_0^+$.

More results regarding semi-linear vector spaces and semi-linear transformation can be found in \cite[Section 3]{LCLPZB}.
\subsection{Complete Semi-Vector Spaces}
For the purposes of this paper, we need to introduce complete semi-vector spaces, that is, semi-vector spaces endowed with a norm, an inner product or a metric under which the space can be completed. The exposure here follows \cite[Section 3.1]{LCLPZB}.

In the remainder of this paper, all semi-vector spaces are taken over the semi-field $\R_0^+$ unless otherwise stated. As usual, we denote the multiplication in $\R_0^+$ and the scalar multiplication of the associated semi-vector space by $\cdot$ or simply use juxtaposition. Moreover, we denote by $\N_n$ the initial segment of $\N$ of length $n$, i.e., $\N_n := \{1, \ldots, n\}$, and set $\N_{0,n} := \N_n\cup\{0\}$.
\begin{definition}
Let $V$ be a semi-vector space. $V$ is called a normed semi-vector space if there exists a function $\n{\cdot}:V\to\R_0^+$ satisfying the conditions for a norm. $V$ is called a Banach semi-vector space if $V$ is complete in this norm.
\end{definition}
\begin{definition}
Let $V$ be a semi-vector space. $V$ is called a pre-Hilbert semi-vector space if there exists a bilinear function $\inn{\cdot}{\cdot}: V\times V\to \R_0^+$ on $V$ satisfying the conditions for an inner product. $V$ is called a Hilbert semi-vector space if $V$ is complete with respect to this inner product. 
\end{definition}
The next results shows that well-known norms on $\R^n$ are also norms on $(\R_0^+)^n$.
\begin{proposition}\label{prop1}
The Euclidean semi-vector space $(\R_0^+)^n$ becomes a normed semi-vector space when endowed with the function $\n{\cdot}: (\R_0^+)^n\to \R_0^+$,
\[
x := (x_1, \ldots, x_n)\mapsto \sqrt{\sum\limits_{i\in\N_n} x_i^2}.
\]
\end{proposition}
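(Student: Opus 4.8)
The plan is to check that the Euclidean function restricted to the nonnegative orthant satisfies the three norm axioms, interpreted in the semi-vector space framework over the semi-field $\R_0^+$. The overall strategy is straightforward because $(\R_0^+)^n$ sits inside $\R^n$, on which the Euclidean norm is already known to be a norm; the only points requiring care are those where the semi-vector space structure differs from that of a genuine vector space.

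First I would confirm that the map takes values in $\R_0^+$: for $x=(x_1,\ldots,x_n)$ with each $x_i\geq 0$, the quantity $\sum_{i\in\N_n} x_i^2$ is a nonnegative real, so its square root lies in $\R_0^+$. Next, for positive definiteness, I would note that $\n{x}=0$ forces $\sum_{i\in\N_n} x_i^2=0$; as each summand is nonnegative, this happens precisely when every $x_i=0$, i.e.\ when $x=0_V$, and the converse is immediate. For homogeneity, the helpful feature of working over $\R_0^+$ is that every scalar $\alpha$ is already nonnegative, so $\sqrt{\alpha^2}=\alpha$ with no absolute value needed; hence $\n{\alpha x}=\sqrt{\sum_{i\in\N_n}(\alpha x_i)^2}=\alpha\,\n{x}$.

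The single non-routine ingredient is the triangle inequality $\n{x+y}\leq\n{x}+\n{y}$. Here I would either invoke the Cauchy--Schwarz inequality $\sum_{i\in\N_n} x_i y_i\leq \n{x}\,\n{y}$ to expand $\n{x+y}^2=\n{x}^2+2\sum_{i\in\N_n} x_i y_i+\n{y}^2\leq(\n{x}+\n{y})^2$ and then take nonnegative square roots, or simply observe that the classical Euclidean triangle inequality on $\R^n$ restricts to the subset $(\R_0^+)^n$. I do not expect any genuine obstacle: the entire content reduces to the classical Minkowski inequality for exponent $p=2$, and the semi-vector space setting only simplifies the homogeneity axiom by removing the absolute value.
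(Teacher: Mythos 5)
Your proof is correct. Note that the paper itself offers no argument for this proposition---it simply cites \cite[Proposition 5]{LCLPZB}---so your direct verification supplies exactly the content the paper outsources. Your checks are the standard ones, and you correctly identify the two places where the semi-vector space setting interacts with the argument: homogeneity simplifies because every scalar $\alpha\in\R_0^+$ satisfies $\sqrt{\alpha^2}=\alpha$ without an absolute value, and the triangle inequality is untouched because $(\R_0^+)^n$ is closed under the addition inherited from $\R^n$, so the classical Minkowski inequality (or your Cauchy--Schwarz expansion of $\n{x+y}^2$) restricts verbatim. One small remark worth keeping in mind, though it does not affect the proposition as stated: in a semi-vector space there are no additive inverses, so this norm does \emph{not} by itself induce a metric via $\n{x-y}$; that is precisely why the paper introduces the $\max$--$\min$ metric separately in Proposition 2.2. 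Your proof establishes the normed structure, which is all that is claimed.
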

\begin{proof}
See \cite[Proposition 5]{LCLPZB}.
\end{proof}
The norm defined in Proposition \ref{prop1} will be referred to as the Euclidean or $\ell_2^+(\N_n)$ norm on $(\R_0^+)^n$.
\begin{proposition}\label{prop2}
Let $x := (x_1, \ldots, x_n), y := (y_1, \ldots, y_n)\in (\R_0^+)^n$. Set
\[
m_i:= m_i(x,y) := \min\{x_i,y_i\}, \quad M_i:= M_i(x,y):=\max\{x_i,y_i\}, \quad i\in \N_n.
\]
Define a function $d: (\R_0^+)^n\times (\R_0^+)^n\to \R_0^+$ by
\[
d(x,y) := \sqrt{\sum\limits_{i\in\N_n} (M_i - m_i)^2}.
\]
Then, $d$ is a metric on $(\R_0^+)^n$, called the Euclidean metric on $(\R_0^+)^n$.
\end{proposition}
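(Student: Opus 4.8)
The plan is to reduce the claim to the fact that the ordinary Euclidean distance on $\R^n$ restricts to a metric on the subset $(\R_0^+)^n$. The crucial observation is the elementary identity
\[
\max\{a,b\} - \min\{a,b\} = \abs{a - b}, \qquad a,b\in\R_0^+,
\]
which one checks by splitting into the cases $a\ge b$ and $a<b$. Applied coordinate-wise, this gives $M_i - m_i = \abs{x_i - y_i}$ for each $i\in\N_n$, and hence
\[
d(x,y) = \sqrt{\sum_{i\in\N_n} \abs{x_i - y_i}^2}.
\]
Thus $d$ is nothing other than the Euclidean metric of $\R^n$ restricted to $(\R_0^+)^n\times(\R_0^+)^n$, and the point of writing it via $\max$ and $\min$ is precisely to avoid the subtraction $x_i - y_i$, which is not an admissible operation in the semi-vector space.

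First I would dispatch the three easy axioms. Non-negativity is immediate, since $d$ is a square root of a sum of non-negative terms, so indeed $d$ maps into $\R_0^+$. For definiteness, $d(x,y)=0$ forces every summand $(M_i-m_i)^2$ to vanish, whence $\abs{x_i-y_i}=0$ for all $i$, i.e.\ $x=y$; the converse is clear. Symmetry follows at once because both $\min\{x_i,y_i\}$ and $\max\{x_i,y_i\}$ are symmetric in their arguments, so interchanging $x$ and $y$ leaves each difference $M_i-m_i$ unchanged.

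The one substantive step is the triangle inequality $d(x,z)\le d(x,y)+d(y,z)$. Having rewritten $d$ in terms of ordinary absolute values, this is exactly the Minkowski inequality for the $\ell_2$ norm applied to the vectors $(\abs{x_i-y_i})_{i\in\N_n}$ and $(\abs{y_i-z_i})_{i\in\N_n}$, combined with the pointwise bound $\abs{x_i-z_i}\le\abs{x_i-y_i}+\abs{y_i-z_i}$. I expect this to be the only place requiring more than a one-line verification; however, once the expression has been identified with the genuine Euclidean distance, the inequality is inherited directly from $(\R^n,\n{\cdot})$, so no new estimate actually has to be proved. This completes the verification that $d$ is a metric on $(\R_0^+)^n$.
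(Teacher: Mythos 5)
Your proposal is correct. Note that the paper does not actually prove Proposition \ref{prop2} internally: it simply cites \cite[Proposition 6]{LCLPZB}, so there is no in-text argument to compare against line by line. Your route --- observing the identity $\max\{a,b\}-\min\{a,b\}=\abs{a-b}$, rewriting $d(x,y)=\bigl(\sum_{i\in\N_n}\abs{x_i-y_i}^2\bigr)^{1/2}$, and concluding that $d$ is the restriction of the standard Euclidean metric of $\R^n$ to the subset $(\R_0^+)^n$, from which all four axioms (and in particular the triangle inequality, via Minkowski) are inherited --- is a complete and self-contained proof, since any restriction of a metric to a subset is again a metric on that subset. It is also entirely consonant with the paper's own viewpoint: the remark immediately following the proposition explicitly records the identity $\max\{x_i,y_i\}-\min\{x_i,y_i\}=\abs{x_i-y_i}$ and explains that the $\max$--$\min$ formulation is retained only because subtraction is not an available operation over the semi-field $\R_0^+$, which is precisely the motivation you identified. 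Your direct verification of definiteness and symmetry is sound, and no gap remains.
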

\begin{proof}
See \cite[Proposition 6]{LCLPZB}.
\end{proof}
\begin{remark}
Although $\max\{x_i,y_i\} - \min\{x_i,y_i\} = \abs{x_i-y_i}$, we will continue using the $\max - \min$ difference instead of the absolute value. The reason will become clear in Section \ref{sec2.3}. 
\end{remark}
\begin{remark}\label{rem1}
It follows from Propositions \ref{prop1} and \ref{prop2}, that a semi-vector space $V$ is Banach if the norm generates a metric under which every Cauchy sequence in $V$ converges to an element of $V$.
\end{remark}
\begin{proposition}
On $(\R_0^+)^n$ define the bilinear mapping $\inn{\cdot}{\cdot}: (\R_0^+)^n\times (\R_0^+)^n\to \R_0^+$ by
\[
\inn{x}{y} := \sum_{i\in\N_n} x_i y_i,
\]
where $x := (x_1, \ldots, x_n), y := (y_1, \ldots, y_n)$. Then, $\inn{\cdot}{\cdot}$ defines an inner product on $(\R_0^+)^n$. Moreover, $\inn{\cdot}{\cdot}$ induces the Euclidean norm on $(\R_0^+)^n$.
\end{proposition}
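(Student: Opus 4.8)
The plan is to verify the three defining properties of an inner product — bilinearity, symmetry, and positive-definiteness — directly from the component definition, and then to obtain the norm statement by a one-line computation.

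First I would check bilinearity. For $x, x', y \in (\R_0^+)^n$ and $\alpha \in \R_0^+$, distributivity and associativity in the semi-field $\R_0^+$ give
\[
\inn{x + x'}{y} = \sum_{i\in\N_n}(x_i + x'_i)\,y_i = \inn{x}{y} + \inn{x'}{y}
\]
and $\inn{\alpha x}{y} = \alpha\inn{x}{y}$; linearity in the second slot follows identically. Symmetry is immediate from commutativity of multiplication in $\R_0^+$, since $\inn{x}{y} = \sum_{i\in\N_n} x_i y_i = \sum_{i\in\N_n} y_i x_i = \inn{y}{x}$.

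The point that deserves the most care is positive-definiteness, because the absence of additive inverses in $\R_0^+$ rules out the classical argument. Here, however, nonnegativity is built in: $\inn{x}{x} = \sum_{i\in\N_n} x_i^2 \in \R_0^+$ automatically. For the definiteness part, suppose $\inn{x}{x} = \sum_{i\in\N_n} x_i^2 = 0$. Since each summand lies in $\R_0^+$, the semi-field axiom $a + b = 0 \Rightarrow a = b = 0$, applied inductively to the finite sum, forces $x_i^2 = 0$ for every $i$; the absence of multiplicative zero divisors then yields $x_i = 0$, so $x$ is the zero vector. This is exactly the step where the semi-field structure, rather than the full field structure, is what makes the argument go through.

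Finally, for the norm statement I would simply observe that $\sqrt{\inn{x}{x}} = \sqrt{\sum_{i\in\N_n} x_i^2}$, which is precisely the Euclidean norm of Proposition \ref{prop1}; hence $\inn{\cdot}{\cdot}$ induces that norm. The main obstacle I anticipate is not any single computation but rather ensuring that whatever inequalities are implicitly demanded by ``the conditions for an inner product'' (in particular the Cauchy--Schwarz inequality) are \emph{not} derived via the forbidden expansion of $\inn{x - ty}{x - ty} \geq 0$, which is meaningless without additive inverses. I would instead note that any such inequality can either be read off from the real-valued Euclidean inner product on $\R^n$ restricted to the nonnegative orthant, or verified directly term-by-term using the nonnegativity of the components.
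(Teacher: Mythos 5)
Your proof is correct, but it is worth noting that it cannot be compared line-by-line with the paper's own argument, because the paper gives no argument: its ``proof'' is a citation to \cite[Propositions 7 and 8]{LCLPZB}. What you have done is supply the direct verification that the paper delegates to the literature, and you have done so soundly. Your handling of positive-definiteness is exactly the step that is genuinely semi-field-specific: since additive inverses are unavailable, one deduces from $\sum_{i\in\N_n} x_i^2 = 0$ that each $x_i^2 = 0$ via the semi-field axiom ($a+b=0 \Rightarrow a=b=0$, iterated over the finite sum) and then $x_i = 0$ from the absence of zero divisors. (In this concrete case the conclusion is of course also immediate from $\R_0^+\subset\R$, since a sum of nonnegative reals vanishes only if each term does; your axiomatic phrasing has the advantage of working over an abstract semi-field, which is more than the proposition asks but costs nothing.) Your caution about the Cauchy--Schwarz inequality is likewise well placed: the classical expansion of $\inn{x-ty}{x-ty}\geq 0$ is meaningless here, and restriction of the ordinary Euclidean inner product on $\R^n$ to the nonnegative orthant is the right substitute; note also that for the literal claim of the proposition --- that $\inn{\cdot}{\cdot}$ induces the Euclidean norm --- the one-line identity $\sqrt{\inn{x}{x}} = \sqrt{\sum_{i\in\N_n} x_i^2}$ already suffices, since Proposition \ref{prop1} has independently established that this expression is a norm on $(\R_0^+)^n$. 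In short: the paper's route buys brevity by outsourcing; yours buys self-containment and makes visible precisely where the semi-ring structure, rather than the field structure, carries the argument.
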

\begin{proof}
See \cite[Propositions 7 and 8]{LCLPZB}.
\end{proof}
Similar to Remark \ref{rem1}, we note that an inner product on a semi-vector space $V$ induces and norm and thus a metric on $V$. Hence, $V$ becomes Hilbert if every Cauchy sequence in $V$ converges in the metric to an element of $V$.

The next results relate norms and semi-linear transformations.
\begin{definition}
Let $V$ and $W$ be semi-vector spaces and $T:V\to W$ a semi-linear transformation. The mapping $T$ is called bounded if $\exists\,M > 0$ such that $\forall\,v\in V$, $\n{Tv} \leq M \n{v}$.
\end{definition}
The above definition allows to define a norm on bounded elements of $\Hom^+(V,W)$. To this end, set
\be\label{eq1}
\n{T} := \sup_{0_V\neq v\in V} \frac{\n{Tv}}{\n{v}}.
\ee
The fact that Eqn. \eqref{eq1} defines indeed a norm is proven in \cite[Proposition 14]{LCLPZB}.
\subsection{Some Complete Semi-Vector Spaces}\label{sec2.3}
Here, we introduce some complete (in the underlying metric) semi-vector spaces that are relevant to this paper. More details can be found in \cite{LCLPZB} and the references given therein.
\begin{theorem}
Let $p\in [1, \infty]$. Define
\be
\ell_p^+ := \ell_p^+ (\N) := \begin{cases}
\left\{a:\N\to\R_0^+ : \sum\limits_{n\in\N} \abs{a(n)}^p < \infty\right\}, & p\in [1, \infty);\\
\left\{a:\N\to\R_0^+ : \sup\limits_{n\in\N} \abs{a(n)} < \infty\right\}, & p = \infty.
\end{cases}
\ee
For $a,b\in \ell_p^+$, set
\be
d_p(a,b) := \begin{cases}
\left(\sum\limits_{n\in\N} (M_n - m_n)^p\right)^{1/p}, & p\in [1, \infty);\\
\sup\limits_{n\in\N}\, (M_n - m_n), & p = \infty,
\end{cases} 
\ee
where
\[
m_n= \min\{a_n,b_n\}, \quad M_n:= \max\{x_n,y_n\}, \quad n\in \N.
\]
Then, the semi-spaces $\ell_p^+$ endowed with the metric $d_p$ are complete.
\end{theorem}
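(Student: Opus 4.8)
The plan is to reduce the assertion to the classical completeness of the real sequence spaces $\ell_p$, after which the only genuinely new content is to check that the positivity constraint survives passage to the limit. The starting observation is that, since $a_n, b_n \in \R_0^+$, one has $M_n - m_n = \max\{a_n,b_n\} - \min\{a_n,b_n\} = \abs{a_n - b_n}$ for every $n \in \N$ (as noted in the remark following Proposition \ref{prop2}). Hence on $\ell_p^+ \times \ell_p^+$ the function $d_p$ agrees with the restriction of the ordinary $\ell_p$ metric, and the task is to show that $\ell_p^+$, regarded as a subset of the ambient real space $\ell_p$, is a complete metric space in its own right.

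First I would take an arbitrary Cauchy sequence $(a^{(k)})_{k\in\N}$ in $(\ell_p^+, d_p)$ and produce a coordinate-wise candidate limit. For each fixed $n\in\N$ and all $k,j\in\N$ one has $\abs{a^{(k)}_n - a^{(j)}_n} \le d_p(a^{(k)}, a^{(j)})$; this is immediate for $p=\infty$ and follows for $p\in[1,\infty)$ by retaining only the $n$-th summand. Thus $(a^{(k)}_n)_k$ is Cauchy in $\R_0^+$, and since $\R_0^+$ is a closed subset of the complete space $\R$, it converges to a limit $a_n \in \R_0^+$. Setting $a := (a_n)_{n\in\N}$ yields the unique possible limit, and the crucial point is that $a_n \ge 0$ for every $n$, so positivity is inherited automatically.

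Next I would upgrade coordinate-wise convergence to convergence in $d_p$ and at the same time verify $a\in\ell_p^+$, by the standard truncation argument. Fix $\eps>0$ and choose $N$ so that $d_p(a^{(k)}, a^{(j)}) < \eps$ for all $k,j\ge N$. For $p\in[1,\infty)$ and any finite $m$ the partial sum $\sum_{n\in\N_m} \abs{a^{(k)}_n - a^{(j)}_n}^p < \eps^p$; letting $j\to\infty$ and invoking the coordinate-wise limits gives $\sum_{n\in\N_m} \abs{a^{(k)}_n - a_n}^p \le \eps^p$, and since $m$ is arbitrary, $d_p(a^{(k)}, a) \le \eps$ for $k\ge N$. (The case $p=\infty$ is handled identically, with the partial sums replaced throughout by the suprema $\sup_{n\in\N_m}$.) This establishes $a^{(k)}\to a$ in $d_p$. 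Finally, viewing $a$ as a nonnegative real sequence, the bound $d_p(a^{(N)}, a)\le\eps$ shows $a^{(N)} - a$ is $p$-summable (resp.\ bounded), so Minkowski's inequality in the ambient space $\ell_p$ forces $a$ itself to be $p$-summable (resp.\ bounded); combined with $a_n\ge 0$ from the previous step, this gives $a\in\ell_p^+$.

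I do not anticipate a serious obstacle: the argument is precisely the textbook completeness proof for $\ell_p$, and the semi-vector-space refinement reduces to the single remark that $\R_0^+$ is closed in $\R$, guaranteeing that the coordinate-wise limits remain nonnegative. The only steps demanding mild care are the interchange of the limit $j\to\infty$ with the finite partial sum (legitimate because each truncated sum depends continuously on finitely many coordinates) and the parallel bookkeeping for $p=\infty$, where every occurrence of a sum is replaced by a supremum.
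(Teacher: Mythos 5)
Your proof is correct: the reduction via $M_n-m_n=\abs{a_n-b_n}$ to the classical $\ell_p$ argument, the coordinate-wise extraction of a limit lying in $\R_0^+$ (closedness of the positivity cone in $\R$), the truncation step to pass from finite partial sums to $d_p(a^{(k)},a)\le\eps$, and the Minkowski bound establishing $a\in\ell_p^+$ are all sound, including the $p=\infty$ bookkeeping. The paper itself gives no argument here but defers entirely to \cite[pp.~10--13]{LCLPZB}, and your proof is precisely the standard direct completeness argument that such a reference contains, so there is nothing to add.
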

\begin{proof}
See \cite[pp. 10--13]{LCLPZB}.
\end{proof}
\begin{definition}
Suppose $[a,b]$ is a nonempty interval in $\R_0^+$. Define
\be
C^+[a,b] := \left\{f\in C[a,b] : f: [a,b]\to\R_0^+ \right\}
\ee 
and let $d: C^+[a,b]\times C^+[a,b]\to \R_0^+$ be given by
\be\label{dfg}
d(f,g) := \max_{x\in [a,b]} \left\{\max\{f(x), g(x)\} - \min\{f(x), g(x)\}\right\}.
\ee
\end{definition}
Note that for $f\in C^+[a,b]$, 
\[
\n{f}_\infty := \sqrt{d(f,f)} = \sqrt{\max_{x\in [a,b]} \left\{\max\{f(x)\} - \min\{f(x)\}\right\}}
\]
defines a norm on $C^+[a,b]$.
\begin{theorem}
The mapping $d$ defined by Eqn. ~\eqref{dfg} is a metric on the semi-space $C^+[a,b]$. When endowed with this metric, $C^+[a,b]$ becomes a complete metrizable space.
\end{theorem}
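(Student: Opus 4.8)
The plan is to reduce everything to standard facts about the space $C[a,b]$ equipped with the uniform metric. The starting observation is the elementary pointwise identity $\max\{u,v\} - \min\{u,v\} = \abs{u-v}$, valid for all $u,v\in\R_0^+$. Applying this with $u = f(x)$ and $v = g(x)$ shows that
\[
d(f,g) = \max_{x\in[a,b]} \abs{f(x) - g(x)},
\]
so $d$ is simply the restriction to $C^+[a,b]$ of the familiar uniform metric on $C[a,b]$. Everything then follows by transporting the standard theory, with attention paid only to the positivity constraint.

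First I would verify the metric axioms. Non-negativity is immediate since $d$ is a maximum of quantities of the form $\abs{f(x)-g(x)} \geq 0$, and symmetry follows from the symmetry of $\abs{\cdot}$ in its two arguments. For the identity of indiscernibles, $d(f,g)=0$ forces $\abs{f(x)-g(x)} = 0$ for every $x\in[a,b]$, hence $f = g$; the converse is clear. For the triangle inequality I would start from the pointwise bound $\abs{f(x)-h(x)} \leq \abs{f(x)-g(x)} + \abs{g(x)-h(x)}$, take the maximum over $x\in[a,b]$ on the left, and bound the right-hand maximum by the sum of the two individual maxima, yielding $d(f,h)\leq d(f,g)+d(g,h)$.

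For completeness, let $(f_n)$ be a $d$-Cauchy sequence in $C^+[a,b]$. Since $d$ agrees with the uniform metric, $(f_n)$ is uniformly Cauchy, and by the (classical) completeness of $C[a,b]$ in the uniform metric there is a continuous limit $f$ with $f_n\to f$ uniformly, equivalently $d(f_n,f)\to 0$. The only point requiring separate verification is that $f$ lies in the positive cone $C^+[a,b]$: this is precisely the statement that $C^+[a,b]$ is a closed subset of $C[a,b]$, and it holds because uniform convergence implies pointwise convergence, so $f(x) = \lim_n f_n(x) \geq 0$ for each $x$ as a limit of nonnegative numbers. Hence $f\in C^+[a,b]$ and the sequence converges within the space.

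The main (and essentially the only) obstacle is conceptual rather than technical: one must recognize that although $C^+[a,b]$ carries the semi-vector space structure of the paper, the metric $d$ coincides with the classical sup-metric, so completeness reduces to the closedness of the positive cone inside the complete space $(C[a,b],\n{\cdot}_\infty)$. Once this identification is made, no genuinely new estimates are needed; the positivity is preserved under limits for free, which is exactly the feature that makes the semi-vector space setting convenient for fractal interpolation.
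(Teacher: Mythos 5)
Your proposal is correct, and it is more self-contained than the paper, which offers no argument at all here: the stated proof is just a citation to \cite[Theorem 7]{LCLPZB}. Your reduction is legitimate, since the paper defines $C^+[a,b]$ literally as a subset of $C[a,b]$ and itself records the identity $\max\{f(x),g(x)\}-\min\{f(x),g(x)\}=\abs{f(x)-g(x)}$ in a remark, so $d$ is exactly the restriction of the uniform metric; the metric axioms and completeness then transport as you say, with the single nontrivial step being that uniform (hence pointwise) limits of nonnegative functions are nonnegative. One caution so you do not think you contradict the paper: Remark 2.4 there asserts that $C^+[a,b]$ is \emph{not} a closed subspace of $C[a,b]$, but the obstruction it names is the scalar field ($\R_0^+$ only), i.e., $C^+[a,b]$ fails to be a \emph{linear subspace}; it is, as you argue, a topologically \emph{closed subset} (a closed cone), which is all your completeness argument needs. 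The trade-off between the two routes is worth noting: the cited reference works intrinsically with the $\max-\min$ form of the metric, which is the formulation that survives in a semi-vector space with no ambient vector space and no subtraction --- this is why the paper insists on $\max-\min$ rather than $\abs{\cdot}$ --- whereas your argument buys brevity by exploiting the ambient space $C[a,b]$, which happens to be available here. Your approach also adapts directly to the paper's subsequent $L_p^+[a,b]$ claim, though there the closedness of the positive cone requires the extra standard step of passing to an a.e.\ convergent subsequence.
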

\begin{proof}
The proof can be found in \cite[Theorem 7]{LCLPZB}.
\end{proof}
\begin{remark}\label{rem2.4}
Note that despite $\max\{f(x), g(x)\} - \min\{f(x), g(x) = \abs{f(x) - g(x)}$, $C^+[a,b]$ is \emph{not} a closed subspace of $C[a,b]$ as the scalars are only taken from $\R_0^+$.
\end{remark}
For $p\in [0,\infty]$ and a nonempty interval $[a,b]\subset\R_0^+$, we now introduce the semi-space 
\be
L_p^+[a,b] := \{f\in L_p[a,b] : f:[a,b]\to\R_0^+\},
\ee
where $L_p[a,b]$ denotes the Lebesgue space of functions  with exponent $p$ on $[a,b]$, and endow it with the mapping $d_p:L_p^+[a,b]\times L_p^+[a,b]\to\R_0^+$,
\be\label{Lpmetric}
d_p (f,g) := \begin{cases}
\displaystyle{\int\limits_{[a,b]}} \left(\max\{f(t),g(t)\} - \min\{f(t),g(t)\}\right)^p dt, & p\in [1,\infty);\\
\esup\limits_{t\in [a,b]} \left(\max\{f(t),g(t)\} - \min\{f(t),g(t)\}\right), & p =\infty,
\end{cases}
\ee
with $dt$ denoting Lebesgue measure.

Note that the mappings $d_p$ are well-defined as the $\max$ and $\min$ of measurable functions is again a measurable function and the values of $f$ and $g$ being $L_p$-functions are finite almost everywhere.
\begin{theorem}
The mapping $d_p$ defined in \eqref{Lpmetric} is a metric on $L_p^+[a,b]$ and with it $L_p^+[a,b]$ becomes a complete metrizable space.
\end{theorem}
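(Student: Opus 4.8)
The plan is to reduce everything to the classical Lebesgue space $L_p[a,b]$. The crucial observation, recorded in the spirit of Remark~\ref{rem2.4}, is the pointwise identity
\[
\max\{f(t),g(t)\}-\min\{f(t),g(t)\}=\abs{f(t)-g(t)},\qquad\text{a.e. }t\in[a,b].
\]
Substituting this into \eqref{Lpmetric} shows that $d_p$ coincides with the restriction to $L_p^+[a,b]$ of the usual $L_p$-metric $(f,g)\mapsto\n{f-g}_p$ on $L_p[a,b]$ (with the $1/p$-th root understood in the case $p\in[1,\infty)$, exactly as in the $\ell_p^+$ theorem). Since $L_p^+[a,b]\subset L_p[a,b]$, the three metric axioms are inherited directly: nonnegativity and symmetry are immediate, $d_p(f,g)=0$ forces $f=g$ almost everywhere and hence equality of the corresponding equivalence classes, and the triangle inequality is precisely Minkowski's inequality for $p\in[1,\infty)$, respectively the triangle inequality for the essential supremum when $p=\infty$. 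Thus no argument beyond the standard theory is needed to establish that $d_p$ is a metric.

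For completeness I would show that $L_p^+[a,b]$ is a \emph{closed} subset of the complete metric space $\bigl(L_p[a,b],\n{\cdot}_p\bigr)$; closedness together with completeness of the ambient space then yields completeness of the subset. So let $(f_n)$ be a $d_p$-Cauchy sequence in $L_p^+[a,b]$. By the identity above it is Cauchy in $L_p[a,b]$, and by the Riesz--Fischer theorem it converges in $L_p$ to some $f\in L_p[a,b]$. It remains to verify that $f$ admits a nonnegative representative, i.e.\ $f\geq 0$ almost everywhere. For $p\in[1,\infty)$ I would invoke the standard consequence of $L_p$-convergence that a subsequence $(f_{n_k})$ converges to $f$ pointwise almost everywhere; since each $f_{n_k}\geq 0$ a.e., the almost-everywhere limit satisfies $f\geq 0$ a.e., so $f\in L_p^+[a,b]$. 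For $p=\infty$ the argument is simpler still: $d_\infty(f_n,f)\to 0$ means $f_n\to f$ uniformly off a single null set, whence nonnegativity of the limit is immediate. In either case $d_p(f_n,f)=\n{f_n-f}_p\to 0$, proving convergence within $L_p^+[a,b]$.

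The only genuinely nontrivial step is this last one, namely that the $L_p$-limit of a sequence of almost-everywhere nonnegative functions is again almost-everywhere nonnegative; the subsequence extraction furnished by Riesz--Fischer handles this cleanly, and once it is in place the whole statement follows from the corresponding classical facts for $L_p[a,b]$. I would also emphasize, parallel to Remark~\ref{rem2.4}, that $L_p^+[a,b]$ is closed as a metric subspace even though it is \emph{not} a linear subspace of $L_p[a,b]$, the scalars being drawn only from $\R_0^+$: closedness is a purely topological property and is unaffected by the restricted scalar semi-field. Consequently $\bigl(L_p^+[a,b],d_p\bigr)$ is a complete metrizable space, as claimed.
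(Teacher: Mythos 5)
Your proof is correct, but it takes a genuinely different route from the paper's. The paper's own ``proof'' is essentially a deferral: it declares the metric axioms straightforward and states that completeness is established ``similar to'' the direct argument for $C^+[a,b]$ in the cited reference---that is, a from-scratch Cauchy-sequence argument carried out intrinsically inside the semi-space, using the $\max-\min$ formulation throughout (recall the paper deliberately avoids writing $\abs{f-g}$, since subtraction is not an operation available within the semi-vector space itself). You instead embed $L_p^+[a,b]$ into the classical vector space $L_p[a,b]$, observe via the pointwise identity $\max\{f,g\}-\min\{f,g\}=\abs{f-g}$ that $d_p$ is the restriction of the usual $L_p$-metric, inherit the metric axioms from Minkowski's inequality, and obtain completeness by proving $L_p^+[a,b]$ is a \emph{closed} subset of the complete space $L_p[a,b]$ (Riesz--Fischer plus an a.e.-convergent subsequence for $p\in[1,\infty)$, uniform convergence off a null set for $p=\infty$). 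Your approach buys brevity and rests entirely on standard theory; its one genuinely nontrivial step---persistence of a.e.\ nonnegativity under $L_p$-convergence---is exactly the right one, and the subsequence extraction is necessary since $L_p$-convergence alone does not give pointwise convergence. The paper's intrinsic route, by contrast, does not presuppose an ambient complete space and so fits the broader semi-vector-space program, where such an embedding may be unavailable. Two further points in your favor: you correctly supply the $1/p$-th root omitted in the displayed formula \eqref{Lpmetric} (taken literally, without the root, the triangle inequality fails for $p>1$, e.g.\ already for constants with $p=2$, so the theorem as printed needs your reading, which is consistent with the $\ell_p^+$ theorem); and you correctly disentangle the apparent tension with Remark~\ref{rem2.4}---$L_p^+[a,b]$ is not a \emph{linear} subspace of $L_p[a,b]$, but it is a topologically closed subset, and closedness is all your completeness argument needs.
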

\begin{proof}
The verification that $d_p$ is a metric is straight-forward and the proof that $L_p^+[a,b]$ is complete with respect to $d_p$ is similar to the one given in \cite[Theorem 7]{LCLPZB}. We leave the details to the reader.
\end{proof}

Replacing Lebesgue measure in the definition of \eqref{Lpmetric} by counting measure, we obtain the semi-spaces $\ell_p^+$.
\begin{remark}
Remark \ref{rem2.4} also applies to the spaces $L_p^+[a,b]$: The latter is \emph{not} a closed subspace of $L_p[a,b]$.
\end{remark}
Note that for $f\in L_p^+[a,b]$, 
\[
\n{f}_p := \sqrt{d_p(f,f)} = \sqrt{\int_{[a,b]} \left(\max_{x\in [a,b]} \left\{\max\{f(x)\} - \min\{f(x)\}\right\}\right)^p}
\]
defines a norm on $L_p^+[a,b]$ (with the usual modification for $p=\infty$).
\section{Preliminaries on Iterated Function Systems}
Here, we state the definition of iterated function system and its attractor.

Let $(\X,d)$ be a complete metric space. For a map $f: \X \to \X$, define the Lipschitz constant associated with $f$ by
\[
\Lip (f) = \sup_{x,y \in \X, x \neq y} \frac{d_\X\big(f(x),f(y)\big)}{d_\X(x,y)}.
\]
The map $f$ is said to be Lipschitz if $\Lip (f) < + \infty$ and a contraction on $\X$ if $\Lip (f) < 1$.

\begin{definition}
Let $1<n\in\N$ and let $\cF := \{f_1, \ldots, f_n\}$ be a finite family of contractions on the complete metric space $(\X,d_\X)$. Then the pair $(\X, \cF)$ is called a contractive iterated function system (IFS) on $\X$.
\end{definition}
As this article deals exclusively with contractive IFSs, we drop the adjective ``contractive'' in the following.

With an IFS $(\X,\cF)$ and its point maps $f\in \cF$, we associate a set-valued map, also denoted by $\cF$, as follows. Let $(\cH(\X), h)$ be the hyperspace of all nonempty compact subsets of $\X$ endowed with the Hausdorff-Pompeiu metric 
\[
h (S_1, S_2) := \max\{d(S_1, S_2), d(S_2, S_1)\},
\]
where $d(S_1,S_2) := \sup\limits_{x\in S_1} d(x, S_2) := \sup\limits_{x\in S_1}\inf\limits_{y\in S_2} d(x,y)$. 

Define $\cF: \cH(\X)\to \cH(\X)$ by (cf. \cite{B1,H})
\be\label{1.1}
\cF (S) := \bigcup_{i=1}^n f_i (S).
\ee
For contractive mappings $f\in \cF$, the set-valued map $\cF$ defined by \eqref{1.1} is also a Lipschitz map on $\cH(\X)$ with Lipschitz constant $\Lip (\cF) = \max \{\Lip (f_i) : i\in\N_n\} < 1$. Also, the completeness of $(\X,d)$ implies the completeness of $(\cH(\X), h)$. 

%
\begin{definition}
The unique fixed point $A\in \cH(\X)$ of the contractive set-valued map $\cF$ is called the attractor of the IFS $(\X,\cF)$.
\end{definition}

Note that since $A$ satisfies the \emph{self-referential equation}
\be
A = \cF(A) = \bigcup_{i=1}^n f_i (A),
\ee
the attractor is in general a fractal set.

It follows directly from the proof of the Banach Fixed Point Theorem that the attractor $A$ is obtained as the limit (in the Hausdorff-Pompeiu metric) of the iterative process $A_k := \cF(A_{k-1})$, $k\in \N$:
\be
A = \lim_{k\to\infty} A_k = \lim_{k\to\infty} \cF^k (A_0),
\ee
for an arbitrary $A_0\in \cH(\X)$. Here, $\cF^k$ denotes the $k$-fold composition of $\cF$ with itself.

%
\section{Fractal Interpolation and Fractal Functions}\label{sect3}
We briefly recall some definitions and properties from fractal interpolation and fractal function theory. In the following, $(\X, d_\X)$ denotes a complete metrizable space.
\subsection{Fractal Interpolation on $\cB(\X, \Y)$}
Suppose a finite family $\{l_i\}_{i = 1}^{n}$ of injective contractions $\X\to \X$ generating a partition of $\X$ in the sense that
\begin{align}
&\X = \bigcup_{i=1}^n l_i(\X);\label{part1}\\
&l_i(\X)\cap l_j(\X) = \emptyset, \quad\forall\;i, j\in \N_n, i\neq j.\label{part2} 
\end{align}

Given another complete metrizable space $(\Y,d_\Y)$ with metric $d_\Y$, a mapping $g:\X\to \Y$ is called \emph{bounded} (with respect to the metric $d_\Y$) if there exists an $M> 0$ so that for all $x_1, x_2\in \X$, $d_\Y(g(x_1),g(x_2)) < M$.

Recall that the set $\cB(\X, \Y) := \{g : \X\to \Y : \text{$g$ is bounded}\}$ when endowed with the metric 
\be\label{d}
d(g,h): = \displaystyle{\sup_{x\in \X}} \,d_\Y(g(x), h(x))
\ee
becomes a complete metrizable space.

\begin{remark}
Under the usual addition and scalar multiplication of functions, the space $\cB(\X,\Y)$ becomes actually a metric linear space, i.e., a vector space under which the operations of vector addition and scalar multiplication are continuous. (See, for instance, \cite{R}.)
\end{remark}

For $i\in\N_n$, let $F_i: \X\times \Y \to \Y$ be a mapping which is uniformly contractive in the second variable, i.e., there exists a $c\in [0,1)$ so that for all $y_1, y_2\in \Y$
\be\label{scon}
d_\Y (F_i(x, y_1), F_i(x, y_2)) \leq c\, d_\Y (y_1, y_2), \quad\forall x\in \X,\,\forall i \in\N_n.
\ee
Define an operator $T: \cB(\X,\Y)\to \Y^{\X}$, by
\be\label{RB}
T g (x) := \sum\limits_{i=1}^n F_i (l_i^{-1} (x), g\circ l_i^{-1} (x))\,\chi_{l_i(\X)}(x), 
\ee
where $\chi_M$ denotes the characteristic function of a set $M$. Such operators are referred to as \emph{Read-Bajractarevi\'c (RB)} operators. The operator $T$ is well-defined and since $g$ is bounded and each $F_i$ contractive in the second variable, $T g\in \cB(\X,\Y)$.

Equivalently, \eqref{RB} can also be written in the form
\be\label{3.3}
(T g \circ l_i) (x) := F_i (x, g(x)),\quad x\in \X, \;i\in\N_n. 
\ee

Moreover, \eqref{scon} implies that $T$ is contractive on $\cB(\X, \Y)$:
\begin{align}\label{estim}
d(T g, T h) & = \sup_{x\in \X} d_\Y (T g (x), T h (x))\nonumber\\
& = \sup_{x\in \X} d_\Y (F(l_i^{-1} (x), g(l_i^{-1} (x))), F(l_i^{-1} (x), h(l_i^{-1} (x))))\nonumber\\
& \leq c\sup_{x\in \X} d_\Y (g\circ l_i^{-1} (x), h \circ l_i^{-1} (x)) \leq c\, d_\Y(g,h).
\end{align}
To achieve notational simplicity, we have set $F(x,y):= \sum\limits_{i=1}^n F_i (x, y)\,\chi_{\X}(x)$ in the above equation. 

Therefore, by the Banach Fixed Point Theorem, $T$ has a unique fixed point $f^*$ in $\cB(\X,\Y)$. This unique fixed point is called the \emph{bounded fractal function} generated by $T$ and it satisfies the \emph{self-referential equation}
\be\label{eq2.8}
f^*(x) = \sum\limits_{i=1}^n F_i (l_i^{-1} (x), f^*\circ l_i^{-1} (x))\,\chi_{l_i(\X)}(x),
\ee
or, equivalently,
\be
f^*\circ l_i (x) = F_i (x, f^*(x)),\quad x\in \X, \;i\in\N_n.
\ee
Per proof of the Banach Fixed Point Theorem, the fixed point $f^*\in \cB(\X,\Y)$ is obtained as the limit of the sequence of mappings
\be\label{3.9}
T^k (f_0) \xrightarrow{\,\,d\,\,} f^*, \quad\text{as $k\to\infty$},
\ee
where $f_0\in \cB(\X,\Y)$ is arbitrary.

Next, we would like to consider a special choice for the mappings $F_i$. To this end, we require the concept of an $F$-space. We recall that a metric $d:\Y\times \Y\to \R$ is called \textit{complete} if every Cauchy sequence in $\Y$ converges with respect to $d$ to a point of $\Y$, and \textit{translation-invariant} if 
\[
d(x+a,y+a) = d(x,y), \quad\text{for all $x,y,a\in \Y$}.
\]

Now assume that $\Y$ is an \emph{${F}$-space}, i.e., a topological vector space whose topology is induced by a complete translation-invariant metric $d$, and in addition that this metric is homogeneous. This setting allows us to consider mappings $F_i$ of the form
\be\label{specialv}
F_i (x,y) :=q_i (x) + S_i (x) \,y,\quad i\in\N_n,
\ee
where $q_i \in \cB(\X,\Y)$ and $S_i : \X\to \R$ is a function.

As the metric $d_\Y$ is homogeneous, the mappings \eqref{specialv} satisfy condition \eqref{scon} provided that the functions $S_i$ are bounded on $\X$ with bounds in $[0,1)$. For then
\begin{align*}
d_\Y (q_i (x) + S_i (x) \,y_1,&q_i (x) + S_i (x) \,y_2) = d_\Y(S_i (x) \,y_1,S_i (x) \,y_2) \\
& = |S_i(x)| d_\Y (y_1, y_2) \leq \|S_i\|_{\infty}\, d_\Y (y_1, y_2) \leq s\,d_\Y (y_1, y_2).
\end{align*}
Here, $\|\cdot\|_{\infty}$ denotes the supremum norm and $s := \max\{\|S_i\|_{\infty}:$ $i\in\N_n\}$. Henceforth, we will assume that all functions $S_i$ are bounded above by $s\in [0,1)$.

With the choice \eqref{specialv}, the RB operator $T$ becomes an affine operator on $\cB(\X,\Y)$ of the form
\begin{align}\label{T}
T g & = \sum_{i=1}^n (q_i\circ l_i^{-1}) \chi_{l_i(\X)} + \sum_{i=1}^n (S_i\circ l_i^{-1}) \cdot (g\circ l_i^{-1}) \chi_{l_i(\X)}\\
& = T(0) + \sum_{i=1}^n (S_i\circ l_i^{-1}) \cdot (g\circ l_i^{-1}) \chi_{l_i(\X)}.
\end{align}
Next, we exhibit the relation between the graph $G(f^*)$ of the fixed point $f^*$ of the operator $T$ given by \eqref{RB} and the attractor of an associated contractive IFS. 

To this end, consider the complete metric space $\X\times \Y$ and define mappings $w_i:\X\times \Y\to \X\times \Y$ by
\be\label{wn}
w_i (x, y) := (l_i (x), F_i (x,y)), \quad i\in\N_n.
\ee
Assume that the mappings $F_i$ in addition to being uniformly contractive in the second variable are also uniformly Lipschitz continuous in the first variable, i.e., that there exists a constant $L > 0$ so that for all $y\in \Y$,
\[
d_\Y(F_i(x_1, y),F_i(x_2, y)) \leq L \, d_\X (x_1,x_2), \quad\forall x_1, x_2\in \X,\quad\forall i = 1, \ldots, n.
\]
Denote by $a:= \max\{a_i : i\in\N_n\}$ the largest of the contractivity constants of the $l_i$ and let $\theta := \frac{1-a}{2L}$. Then the mapping $d_\theta : (\X\times \Y)\times (\X\times \Y) \to \R$ given by
\[
d_\theta := d_\X + \theta\,d_\Y
\]
is a metric on $\X\times \Y$ compatible with the product topology on $\X\times \Y$.

The next theorem is a special case of a result presented in \cite{bhm}.

\begin{theorem}\label{thm3.1}
The family $\cF_T := (\X\times \Y, w_1, w_2, \ldots, w_n)$ is a contractive IFS in the metric $d_\theta$ and the graph $G(f^*)$ of the fractal function $f^*$ generated by the RB operator $T$ given by \eqref{RB} is the unique attractor of $\cF_T$. Moreover, 
\be\label{GW}
G(T g) = \cF_T (G(g)),\quad\forall\,g\in B(\X,\Y),
\ee
where $\cF_T$ denotes the set-valued operator \eqref{1.1}.
\end{theorem}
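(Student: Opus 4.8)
The plan is to establish the three assertions in turn: (i) each $w_i$ is a $d_\theta$-contraction, so that $\cF_T$ is a contractive IFS with a unique attractor; (ii) the graph identity $G(Tg)=\cF_T(G(g))$; and (iii) the identification of $G(f^*)$ with the attractor, which then follows from (i) and (ii).

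For (i), I would fix $i\in\N_n$ and two points $(x_1,y_1),(x_2,y_2)\in\X\times\Y$ and estimate
\[
d_\theta(w_i(x_1,y_1),w_i(x_2,y_2)) = d_\X(l_i(x_1),l_i(x_2)) + \theta\,d_\Y(F_i(x_1,y_1),F_i(x_2,y_2)).
\]
The first term is bounded by $a\,d_\X(x_1,x_2)$ using the contractivity of $l_i$. For the second term I would insert the intermediate point $F_i(x_1,y_2)$ and split by the triangle inequality, bounding the $y$-difference by $c\,d_\Y(y_1,y_2)$ (uniform contractivity in the second variable) and the $x$-difference by $L\,d_\X(x_1,x_2)$ (uniform Lipschitz continuity in the first variable). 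Collecting terms and substituting $\theta=(1-a)/(2L)$, so that $a+\theta L=(1+a)/2$, yields
\[
d_\theta(w_i(x_1,y_1),w_i(x_2,y_2))\le \tfrac{1+a}{2}\,d_\X(x_1,x_2)+c\,\bigl(\theta\,d_\Y(y_1,y_2)\bigr)\le \kappa\,d_\theta\bigl((x_1,y_1),(x_2,y_2)\bigr),
\]
with $\kappa:=\max\{(1+a)/2,\,c\}<1$. This is the heart of the argument, and getting the contraction factor right --- in particular, recognizing that the choice of $\theta$ is exactly what balances the cross term $\theta L$ against the $d_\X$-part --- is the step I expect to require the most care.

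For (ii), I would compute directly. Writing $G(g)=\{(x,g(x)):x\in\X\}$, the image is $w_i(G(g))=\{(l_i(x),F_i(x,g(x))):x\in\X\}$. By \eqref{3.3} we have $F_i(x,g(x))=Tg(l_i(x))$, so $w_i(G(g))=\{(x',Tg(x')):x'\in l_i(\X)\}$ is precisely the portion of the graph of $Tg$ lying over $l_i(\X)$. Taking the union over $i$ and invoking the covering property \eqref{part1}, namely $\bigcup_i l_i(\X)=\X$, gives $\cF_T(G(g))=\{(x',Tg(x')):x'\in\X\}=G(Tg)$.

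Finally, for (iii), since $f^*$ is the fixed point of $T$, applying (ii) with $g=f^*$ gives $\cF_T(G(f^*))=G(Tf^*)=G(f^*)$, so $G(f^*)$ is invariant under $\cF_T$. Because $\cF_T$ has a unique attractor $A$ by (i), it remains to identify $G(f^*)$ with $A$. The cleanest route is via the iteration: starting from any $f_0$ whose graph lies in $\cH(\X\times\Y)$, the uniform convergence $T^kf_0\to f^*$ recorded in \eqref{3.9} forces $G(T^kf_0)\to G(f^*)$ in the Hausdorff--Pompeiu metric, while the iterated form of (ii) gives $G(T^kf_0)=\cF_T^k(G(f_0))\to A$; hence $G(f^*)=A$. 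The one point that genuinely needs attention here is the compactness of the graphs involved, so that the limit truly lives in $\cH(\X\times\Y)$; this technical hypothesis is what places the statement as a special case of the result in \cite{bhm}, and I would invoke it rather than reprove it.
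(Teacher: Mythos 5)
Your proof is correct, but it cannot be matched step-by-step against an argument in the paper, because the paper gives none: Theorem \ref{thm3.1} is introduced as ``a special case of a result presented in \cite{bhm}'' and the proof is delegated entirely to that reference. What you have done is supply the missing self-contained argument, and all three steps are sound. The contraction estimate is the standard one: inserting the intermediate point $F_i(x_1,y_2)$, using \eqref{scon} and the uniform Lipschitz bound $L$, and computing $a+\theta L=(1+a)/2$ shows exactly why $\theta=(1-a)/(2L)$ is the right weight, giving $\kappa=\max\{(1+a)/2,\,c\}<1$. The graph identity follows from \eqref{3.3} together with the covering property \eqref{part1} precisely as you compute (with the disjointness \eqref{part2} being what makes $Tg$, and hence the computation, well defined). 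The identification $G(f^*)=A$ via $G(T^kf_0)=\cF_T^k(G(f_0))$ and the convergence \eqref{3.9} is also the right mechanism, since $d(g,h)<\eps$ forces the Hausdorff--Pompeiu distance between $G(g)$ and $G(h)$ to be at most $\theta\eps$ in $d_\theta$. The one genuine subtlety is the one you flagged yourself: for a merely bounded, possibly discontinuous fixed point $f^*\in\cB(\X,\Y)$, the graph need not be compact or even closed, and since the Hausdorff--Pompeiu metric does not distinguish a set from its closure, the limit of $\cF_T^k(G(f_0))$ in $\cH(\X\times\Y)$ is strictly speaking $\overline{G(f^*)}$; the literal equality $G(f^*)=A$ requires compactness of the graphs involved (e.g.\ $\X$ compact and $f^*$ continuous, as in the interpolation setting of Section \ref{sect3}), which is exactly the hypothesis carried by the general result in \cite{bhm} that you correctly chose to invoke rather than reprove. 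In short, where the paper buys brevity by citation, your argument buys self-containment at the modest cost of making that compactness caveat explicit --- a trade that arguably improves on the paper's presentation.
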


Equation \eqref{GW} can be represented by the following commutative diagram
\be\label{diagram}
\begin{CD}
\X\times \Y @>\cF_T>> \X\times \X\\
@AAGA                  @AAGA\\
\cB(\X,\Y) @>T>>  \cB(\X,\Y)
\end{CD}
\ee
\nl
where $G$ is the mapping $\cB(\X,\Y)\ni g\mapsto G(g) = \{(x, g(x)) : x\in \X\}\in \X\times \Y$.

On the other hand, suppose that $\cF = (\X\times \Y, w_1, w_2, \ldots, w_n)$ is an IFS whose mappings $w_i$ are of the form \eqref{wn} where the functions $l_i$ are contractive injections satisfying \eqref{part1} and \eqref{part2}, and the mappings $F_i$ are  uniformly Lipschitz continuous in the first variable and uniformly contractive in the second variable. Then we can associate with the IFS $\cF$ an RB operator $T_\cF$ of the form \eqref{RB}. The attractor $A_\cF$ of $\cF$ is then the graph $G(f)$ of the fixed point $f$ of $T_\cF$. (This was the original approach in \cite{B2} to define a fractal interpolation function on a compact interval in $\R$.) The commutativity of the diagram \eqref{diagram} then holds with $\cF_T$ replaced by $\cF$ and $T$ replaced by $T_\cF$.

We now specialize even further and choose an arbitrary $f\in\cB(\X,\Y)$ and a bounded linear operator $L: \cB(\X,\Y)\to\cB(\X,\Y)$. Set (see also \cite{NC})
\be\label{q}
q_i := f\circ l_i - S_i\cdot Lf.
\ee
The RB operator $T$ then reads 
\be\label{eq3.17}
T g = f + (S_i\circ l_i^{-1})\cdot (g-L f)\circ l_i^{-1}, \quad \textrm{on} \quad l_i(\X),\, i\in \N_n. 
\ee 
and, under the assumption that $s < 1$ its unique fixed point $f^*\in \cB(\X,\Y)$ satisfies the self-referential equation
\be\label{3.15}
f^* = f + (S_i \circ l_i^{-1})\cdot ({f^*}-Lf)\circ l_i^{-1}, \quad \textrm{on} \quad l_i(\X),\, i\in \N_n.
\ee

In the case of univariate fractal interpolation on the real line with $\X:= [x_0, x_n]$, $-\infty < x_0 < x_n < +\infty$, $Lf$ can be chosen to be the affine function $\beta$ whose graph connects the points $(x_0, f(x_0))$ and $(x_n, f(x_n))$. 
\subsection{Fractal Interpolation on $\cC(\X, \Y)$}\label{sect4.2}
Here, we briefly consider fractal interpolation in the complete metric space of continuous functions $(\cC(\X, \R), d)$ instead of $(\cB(\X,\R),d)$. For this purpose, the following changes in the construction ensure that the RB operator \eqref{eq3.17} maps $(\cC(\X, \R)$ into itself. 

To this end, let $x_0 < x_1 < \cdots < x_{n-1} < x_n$ be an increasing sequence of points in $[x_0,x_n]$ and define contractive homeomorphisms $l_i :[x_0,x_n] \to [x_{i-1},x_i]$ such that $x_{i-1} = l_i(x_0)$ and $x_i = l_i (x_n)$. Further assume that $f$, $Lf$, and $S_i$ are continuous on $[x_0,x_n]$, that
\[
Lf (x_0) := f(x_0)\quad\textrm{and}\quad Lf (x_n) := f(x_n),
\]
and impose the (interior) join-up conditions
\be\label{joinup}
Tf (x_j-) = Tf(x_j+), \quad j \in \N_{n-1}.
\ee
Then, the fixed point $f^*$ of the RB operator defined in Eqn. ~\eqref{eq3.17} will be a continuous function whose graph interpolates the set $\{(x_j, f(x_j)) : j = 0, 1,\ldots, n\}$. Such functions are usually referred to as \emph{fractal interpolation functions} \cite{B2,H}. As the RB operator is the same at each level of recursion \eqref{3.9}, we refer to this as \emph{stationary fractal interpolation}. For the non-stationary setting, i.e., the case where the RB operator is level-dependent, we refer the interested reader to \cite{M}. 

For fractal interpolation in more general function spaces, we list \cite{M2} and for the particular choice of $q$ in \eqref{q} \cite{NC} as references.
\section{Fractal Interpolation in Complete Semi-Vector Spaces}
In this section, we consider fractal interpolation in the semi-spaces $C^+$ and $L_p^+$, $p\in [0,\infty]$. To this end, let $I:=[x_0,x_n]\subset\R_0^+$ be a given nonempty interval and let $L:C^+I\to C^+I$ a bounded semi-linear operator in the sense of Definition~\ref{def3}. 

Suppose that $(x_j)_{j\in\N_{0,n}}$ is a strictly increasing sequence of real numbers. Further suppose that for each $i\in\N_n$, $l_i$ is a contractive diffeomorphism $[x_0,x_n]\to [x_{i-1},x_i]$ with 
\be\label{l}
l_i(x_0) = x_{i-1}\quad \text{and}\quad l_i(x_n) = x_i. 
\ee
Setting
\be\label{part}
I_i:= \begin{cases}
[x_{i-1}, x_i), & i\in\N_{n-1}\\
[x_{n-1},x_n], & i = n
\end{cases},
\ee
we have that $I = \coprod\limits_{i=1}^n I_i$ with $\coprod$ denoting disjoint union. For later purposes, we also assume that 
\be\label{Dl}
D l_i\in L_\infty^+I,\qquad i\in \N_n,
\ee
where $D$ denotes the derivative with respect to the independent variable.
\subsection{Fractal Interpolation in $L_p^+I$}
Let $\{l_i : i\in\N_n\}$ be given as in \eqref{l}. Define a semi-affine RB operator $T: L_p^+I\to (\R_0^+)^I$ of the form \eqref{T}
\be\label{TL}
T g : = \sum_{i=1}^n (q_i\circ l_i^{-1}) \chi_{l_i} + \sum_{i=1}^n (S_i\circ l_i^{-1}) \cdot (g\circ l_i^{-1}) \chi_{l_i},
\ee
where $q_i\in L_p^+I$ and $S_i\in L_\infty^+I$. The requirement on $S_i$ is based on the fact that $L_\infty \cdot L_p \subset L_p$. Hence, $Tg\in L_p^+I$.

To derive conditions for $T$ to be contractive, consider $g,h\in L_p^+$. 
\begin{align*}
d_p(Tg, Th) &= \int_I \left(\max\{Tg(x), Th(x)\} - \min\{Tg(x), Th(x)\}\right)^p dx\\
& = \sum_{i=1}^n \int_{I_i}  \left(\max\{(q_i\circ l_i^{-1})(x) + (S_i\circ l_i^{-1})(x) \cdot (g\circ l_i^{-1})(x), \right.\\
& \qquad \left. (q_i\circ l_i^{-1})(x) + (S_i\circ l_i^{-1})(x) \cdot (h\circ l_i^{-1})(x)\} - \right. \\
& \qquad - \left.\min\{(q_i\circ l_i^{-1})(x) + (S_i\circ l_i^{-1})(x) \cdot (g\circ l_i^{-1})(x), \right.\\
& \qquad \left. (q_i\circ l_i^{-1})(x) + (S_i\circ l_i^{-1})(x) \cdot (h\circ l_i^{-1})(x)\}\}\right)^p dx\\
& = \sum_{i=1}^n \int_{I_i}  \left(\max\{(S_i\circ l_i^{-1})(x) \cdot (g\circ l_i^{-1})(x), \right.\\
& \qquad \left. (S_i\circ l_i^{-1})(x) \cdot (h\circ l_i^{-1})(x)\} - \right. \\
& \qquad - \left.\min\{(S_i\circ l_i^{-1})(x) \cdot (g\circ l_i^{-1})(x), \right.\\
& \qquad \left. (S_i\circ l_i^{-1})(x) \cdot (h\circ l_i^{-1})(x)\}\}\right)^p dx\\
& \leq \sum_{i=1}^n \n{D l_i\cdot S_i^p}_\infty d_p(g,h),
\end{align*}
where we used the substitution $x\mapsto l_i (x)$ to obtain the last inequality. Note that by the assumptions on $Dl_i$ and $S_i$ and the fact that $L_\infty$ is a Banach algebra under pointwise multiplication,  $\n{D l_i\cdot S_i^p}_\infty$ is finite.

Thus, $T$ is a contractive semi-affine operator on $L_p^+I$ if  $\sum\limits_{i=1}^n \n{D l_i \cdot S_i^p}_\infty < 1$. 
\begin{theorem}\label{thm5.1}
Let contractive homeomorphism $(l_i)_{i\in\N_n}$ be defined as in \eqref{l} and satisfying \eqref{Dl}. Suppose $(q_i)_{i\in\N_n}\subset L_p^+I$ and $(S_i)_{i\in\N_n}\subset L_\infty^+I$. The semi-affine RB operator given by \eqref{TL} is contractive on $L_p^+I$ provided
\be\label{Lcond}
\sum_{i=1}^n \n{D l_i\cdot S_i^p}_\infty < 1.
\ee
The unique fixed point $f^*$ of $T$ satisfies the self-referential equation
\[
f^* = \sum_{i=1}^n (q_i\circ l_i^{-1}) \chi_{l_i} + \sum_{i=1}^n (S_i\circ l_i^{-1}) \cdot (f^*\circ l_i^{-1}) \chi_{l_i}
\]
and $\graph f^*$ is the attractor of the IFS associated with $T$ and thus in general a fractal set.
\end{theorem}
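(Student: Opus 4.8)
The plan is to establish the three assertions in turn: contractivity of $T$ under \eqref{Lcond}, existence and uniqueness of the fixed point together with its self-referential equation via the Banach Fixed Point Theorem, and finally the identification of $\graph f^*$ with the attractor of the associated IFS via Theorem~\ref{thm3.1}. The first point is essentially carried out in the computation immediately preceding the statement; what remains is to make explicit the two algebraic identities that justify the cancellation of the $q_i$-terms and the extraction of $S_i$, since these are precisely where the semi-vector structure (and the use of $\max-\min$ rather than $\abs{\cdot}$) is essential.

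First I would record, for $a,u,v\in\R_0^+$, the translation identity $\max\{a+u,a+v\}-\min\{a+u,a+v\}=\max\{u,v\}-\min\{u,v\}$ and, for a nonnegative scalar $S$, the homogeneity identity $\max\{Su,Sv\}-\min\{Su,Sv\}=S(\max\{u,v\}-\min\{u,v\})$. The first annihilates the $q_i\circ l_i^{-1}$ contributions on each $I_i$, and the second (raised to the $p$-th power, which is legitimate since all quantities are nonnegative) pulls out the factor $(S_i\circ l_i^{-1})^p$. Both identities rely on the scalars lying in $\R_0^+$, so that order is preserved; this is the structural reason the paper insists on the $\max-\min$ formulation. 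Applying the change of variables $x=l_i(t)$, whose Jacobian is $Dl_i\in L_\infty^+I$ by \eqref{Dl}, and bounding $S_i(t)^p\,Dl_i(t)\le\n{Dl_i\cdot S_i^p}_\infty$ pointwise a.e., yields on summation
\[
d_p(Tg,Th)\le\Big(\sum_{i=1}^n\n{Dl_i\cdot S_i^p}_\infty\Big)\,d_p(g,h),
\]
so that \eqref{Lcond} makes $T$ a contraction.

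Since $(L_p^+I,d_p)$ is a complete metric space (the completeness theorem of Section~\ref{sec2.3}) and $T$ maps $L_p^+I$ into itself (as $L_\infty\cdot L_p\subset L_p$), the Banach Fixed Point Theorem applies verbatim --- it requires only a complete metric space and a contraction, so the semi-vector rather than vector nature of $L_p^+I$ is immaterial at this step. This produces a unique $f^*\in L_p^+I$ with $Tf^*=f^*$, and substituting $g=f^*$ into \eqref{TL} gives the displayed self-referential equation.

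For the last assertion I would pass to the point-map IFS $w_i(x,y)=(l_i(x),q_i(x)+S_i(x)y)$ of the form \eqref{wn}--\eqref{specialv} and invoke Theorem~\ref{thm3.1}, which identifies $\graph f^*$ as the unique attractor through the commuting relation $G(Tg)=\cF_T(G(g))$. The main obstacle lies here rather than in the contraction estimate: Theorem~\ref{thm3.1} was established over $\cB(\X,\Y)$ under the hypotheses that the $F_i$ are uniformly Lipschitz in the first variable and uniformly contractive in the second with a common factor $s<1$, whereas the present data only furnish $q_i\in L_p^+I$, $S_i\in L_\infty^+I$ and the weaker quantity \eqref{Lcond}, which does \emph{not} by itself force $\n{S_i}_\infty<1$. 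Moreover, in the $L_p$ setting elements are equivalence classes, so ``$\graph f^*$'' is not literally a subset of $I\times\R_0^+$ until a representative is fixed, and uniform Lipschitz dependence on the first variable is not guaranteed by mere $L_p$/$L_\infty$ membership. I would therefore either adopt the standing regularity assumptions implicit in Theorem~\ref{thm3.1} (continuity/Lipschitz of $q_i$ and $S_i$, together with $s<1$) or interpret the attractor statement for a fixed continuous representative; under such hypotheses Theorem~\ref{thm3.1} applies directly and delivers the fractal graph.
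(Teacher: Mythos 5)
Your proposal follows essentially the same route as the paper, whose entire proof consists of the contraction computation displayed immediately before the theorem together with a bare appeal to Theorem~\ref{thm3.1}; your reconstruction of that computation (translation invariance and nonnegative homogeneity of the $\max$--$\min$ difference to cancel the $q_i$-terms and extract $S_i$, then the change of variables with Jacobian $Dl_i$ and the bound by $\n{Dl_i\cdot S_i^p}_\infty$) is exactly the paper's argument made explicit, as is the Banach fixed point step in the complete metric space $(L_p^+I,d_p)$. Your caveat about the attractor assertion is also well placed rather than a defect of your proof: the paper invokes Theorem~\ref{thm3.1} without checking its hypotheses, and indeed \eqref{Lcond} does not imply $\max_{i\in\N_n}\n{S_i}_\infty<1$ nor any Lipschitz continuity of $q_i$ or $S_i$ in the first variable (and for $L_p$-classes the graph is only defined up to choice of representative), so the additional regularity assumptions or the fixed-representative interpretation you propose are precisely what is needed to make that final claim rigorous.
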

\begin{proof}
The statements follow from the above arguments and Theorem \ref{thm3.1}.
\end{proof}
Note that the fixed point $f^*$ depends on the the partition $(I_i)_{i\in\N_n}$, i.e., the mappings $\bl:=(l_i)_{i\in\N_n}$, the scaling functions $\bS:=(S_i)_{i\in\N_n}$, and the functions $\bq:=(q_i)_{i\in\N_n}$. Therefore, a more accurate notation for $f^*$ should be $f^*(\bl,\bS,\bq)$. Unless necessary, we suppress the dependence of the fixed point on $\bl$, $\bS$, and $\bq$. Should $f^*$ only depend on $\bullet\in\{\bl,\bS,\bq\}$ then we express this dependence as $f^*(\bullet)$.
\begin{definition}
The unique fixed point $f^*(\bl,\bS,\bq)$ in Theorem \ref{thm5.1} is called a fractal function of class $L_p^+$. We denote the collection of all such fractal functions by $\fL_p^+ := \fL_p^+(\bl,\bS,\bq)$.
\end{definition}

\begin{remark}
If one considers a semi-affine RB operator of the form \eqref{eq3.17} with some bounded semi-linear operator $L$ then additional conditions need to be imposed to guarantee that such an RB operator maps $L_p^+I$ into itself:
\begin{enumerate}
\item For $f\in L_p^+I$, $L$ must be such that $Lf\in L_p^+I$. This may be achieved by choosing for instance $Lf := v \cdot f$, for some $v\in L_\infty^+I$.
\item The terms $q_i = f\circ l_i - S_i\cdot Lf$ must be nonnegative. A necessary condition could be $f \geq \n{S_i \cdot Lf}_\infty$ on $I$.
\end{enumerate}
Once these two conditions are satisfied, the statements in Theorem \ref{thm5.1} also hold for this setting.
\end{remark}

In the special case when $(l_i)_{i\in\N_n}$ are affine functions, i.e., have the form $l_i = a_i (\cdot) + b_i$, where the coefficients $a_i$ and the translates $b_i$ are determined by \eqref{l}, condition \eqref{Lcond} is replaced by
\be\label{affinecond}
\max_{i\in\N_n}\{\n{S_i^p}_\infty\} < 1.
\ee
This is seen as follows: The Lipschitz constant for $D l_i$ is given by $a_i = \frac{x_i - x_{i-1}}{x_n - x_0}$ and therefore $\sum\limits_{i=1}^n \n{D l_i\cdot S_i^p}_\infty \leq \max\limits_{i\in\N_n}\{\n{S_i^p}_\infty\} \sum\limits_{i=1}^n \frac{x_i - x_{i-1}}{x_n - x_0} = \max\limits_{i\in\N_n}\{\n{S_i^p}_\infty\}$.

If in addition $(S_i)_{i\in\N_n} \subset\R_0^+$, then \eqref{affinecond} simply reduces to
\[
\max_{i\in\N_n} S_i^p < 1.
\]

Suppose that $\bl$ and $\bS$ are fixed. Then, the following result holds.
\begin{proposition}\label{prop5.1}
The mapping $\vartheta: (L_p^+)^n \to \fL_p^+(\bq)$, $\bq\mapsto f^*(\bq)$ is a semi-linear isomorphism. 
\end{proposition}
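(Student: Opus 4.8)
The plan is to exploit the affine structure of the RB operator together with the uniqueness of its fixed point, thereby avoiding any appeal to additive inverses, which need not exist in a semi-vector space. First I would record the decomposition of the operator \eqref{TL} as $T_{\bq}\,g = A(\bq) + B\,g$, where $A(\bq) := \sum_{i=1}^n (q_i\circ l_i^{-1})\chi_{l_i}$ depends only on $\bq$ while $B\,g := \sum_{i=1}^n (S_i\circ l_i^{-1})\cdot(g\circ l_i^{-1})\chi_{l_i}$ is independent of $\bq$. Both $A:(L_p^+)^n\to L_p^+I$ and $B:L_p^+I\to L_p^+I$ are semi-linear: the identities $A(\bq_1+\bq_2)=A(\bq_1)+A(\bq_2)$, $A(\lambda\bq)=\lambda A(\bq)$, and the analogues for $B$, follow termwise because precomposition with $l_i^{-1}$, multiplication by $\chi_{l_i}$, and multiplication by $S_i\circ l_i^{-1}$ all commute with $+$ and with scalars from $\R_0^+$. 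By Theorem \ref{thm5.1}, condition \eqref{Lcond} makes $B$ a contraction, so each $T_{\bq}$ has a unique fixed point $f^*(\bq)=\vartheta(\bq)$.

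Next I would establish semi-linearity of $\vartheta$ purely from this uniqueness. Given $\bq_1,\bq_2$, adding the fixed-point equations $f^*(\bq_j)=A(\bq_j)+B f^*(\bq_j)$ and using the semi-linearity of $A$ and $B$ yields $f^*(\bq_1)+f^*(\bq_2) = A(\bq_1+\bq_2) + B\big(f^*(\bq_1)+f^*(\bq_2)\big)$, so that $f^*(\bq_1)+f^*(\bq_2)$ is a fixed point of $T_{\bq_1+\bq_2}$; uniqueness forces $\vartheta(\bq_1+\bq_2)=\vartheta(\bq_1)+\vartheta(\bq_2)$. Homogeneity is identical: applying $\lambda\in\R_0^+$ to $f^*(\bq)=A(\bq)+Bf^*(\bq)$ and using the distributive law together with the homogeneity of $A$ and $B$ shows that $\lambda f^*(\bq)$ is the (unique) fixed point of $T_{\lambda\bq}$, whence $\vartheta(\lambda\bq)=\lambda\vartheta(\bq)$.

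For bijectivity, surjectivity onto $\fL_p^+(\bq)$ is immediate, since this codomain is by definition the image $\{f^*(\bq):\bq\in(L_p^+)^n\}$; consequently $\fL_p^+(\bq)$ is a semi-subspace of $L_p^+I$ and the statement makes sense. Injectivity is where the absence of subtraction must be handled with care. Suppose $\vartheta(\bq_1)=\vartheta(\bq_2)=:f^*$. Then $A(\bq_1)+Bf^* = f^* = A(\bq_2)+Bf^*$, and the additive cancellation law of Definition \ref{def1}(i) lets me cancel the common summand $Bf^*$ to obtain $A(\bq_1)=A(\bq_2)$. Because the sets $l_i(I)=I_i$ form a disjoint partition of $I$ (see \eqref{part}), restricting to $I_i$ gives $q_{1,i}\circ l_i^{-1}=q_{2,i}\circ l_i^{-1}$ almost everywhere, and since $l_i^{-1}$ is a diffeomorphism this forces $q_{1,i}=q_{2,i}$ in $L_p^+$ for each $i$; hence $\bq_1=\bq_2$, so $A$, and therefore $\vartheta$, is injective. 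Finally, a bijective semi-linear map automatically has a semi-linear inverse, since the additive and homogeneity identities transport back along the bijection; thus $\vartheta$ is a semi-linear isomorphism.

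The main obstacle is precisely the injectivity step: in a genuine vector space one would simply write $\bq = A^{-1}\big((I-B)f^*\big)$, but here neither $I-B$ nor additive inverses are available. The substitute is to lean on the cancellation law to strip off the contraction term and on the disjointness of the partition \eqref{part} to recover each $q_i$ from $A(\bq)$; checking that these two ingredients suffice is the only nonroutine part, the remainder being bookkeeping with the semi-linearity of $A$ and $B$ and the uniqueness guaranteed by Theorem \ref{thm5.1}.
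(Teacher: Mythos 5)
Your proposal is correct and takes essentially the same approach as the paper, whose proof is a one-line sketch invoking exactly the two ingredients you develop in detail: the semi-vector space structure of $L_p^+I$ (in particular the additive cancellation law of Definition~\ref{def1}(i), which you correctly identify as the substitute for subtraction in the injectivity step) and the uniqueness of the fixed point $f^*(\bq)$ (which yields semi-linearity of $\vartheta$). Your write-up simply supplies the bookkeeping the paper leaves to the reader.
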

\begin{proof}
The result follows from the fact that $L_p^+I$ is a semi-vector space and that the fixed point $f^*$ is unique.
\end{proof}
\subsubsection{Sequences and Series of Fractal Functions of Type $L_p^+$}
Let $(f_m)_{m\in\N}\subset \fL_p^+$ be an $L_\infty^+$--bounded sequence of fractal functions with the self-referential equations
\be\label{eqseq}
f_m = \sum_{i\in\N_n} q_{m,i}\circ l_i^{-1} \chi_{I_i} + \sum_{i\in\N_n} (S_{m,i} \circ l_i^{-1})\cdot (f_m\circ l_i^{-1}) \chi_{I_i},\quad m\in \N,
\ee
where $(S_{m,i})_{m\in\N}\subset L_\infty^+I$ and $(q_{m,i})_{m\in\N}\subset L_p^+I$ are Cauchy sequences.
\begin{theorem}\label{thm5.2}
The bounded sequence \eqref{eqseq} of fractal functions converges in $L_p^+$ to a fractal function $f\in\fL_p^+$ given by the self-referential equation
\be\label{limseq}
f = \sum_{i\in\N_n} q_{i}\circ l_i^{-1} \chi_{I_i} + \sum_{i\in\N_n} (S_{i} \circ l_i^{-1})\cdot (f\circ l_i^{-1}) \chi_{I_i},
\ee
where $q_i = \lim\limits_{m\to\infty} q_{m,i}$ and $S_i = \lim\limits_{m\to\infty} S_{m,i}$.
\end{theorem}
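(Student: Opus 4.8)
The plan is to identify $f_m$ as the unique fixed point of the semi-affine RB operator $T_m\colon L_p^+I\to L_p^+I$ built, via \eqref{TL}, from the fixed maps $\bl=(l_i)_{i\in\N_n}$ and the data $\bS_m=(S_{m,i})_{i\in\N_n}$, $\bq_m=(q_{m,i})_{i\in\N_n}$, and then to run the standard continuous-dependence-of-the-fixed-point argument for contractions. Because $(S_{m,i})_{m\in\N}$ and $(q_{m,i})_{m\in\N}$ are Cauchy in the complete spaces $L_\infty^+I$ and $L_p^+I$, they converge to $S_i$ and $q_i$; let $T$ be the operator \eqref{limseq} assembled from this limiting data and $f$ its fixed point. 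Setting $c_m:=\sum_{i=1}^n\n{Dl_i\cdot S_{m,i}^p}_\infty$ and $c:=\sum_{i=1}^n\n{Dl_i\cdot S_i^p}_\infty$, continuity of $t\mapsto t^p$ together with the Banach-algebra structure of $L_\infty$ gives $c_m\to c$. The assertion $f\in\fL_p^+$ is exactly the statement that the limiting data obeys the contractivity condition \eqref{Lcond}, i.e.\ $c<1$; granting this, Theorem~\ref{thm5.1} furnishes the unique fixed point $f=f^*(\bl,\bS,\bq)\in\fL_p^+$ satisfying \eqref{limseq}, and moreover $c_m\le c':=\tfrac12(1+c)<1$ for all sufficiently large $m$.

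Next I would compare $f_m$ with $f$ \emph{directly in the metric} $d_p$. Here lies the only structural wrinkle: since $L_p^+I$ is merely a semi-vector space, there are no additive inverses, so the usual ``subtract the two self-referential equations'' step is unavailable and every estimate must be phrased through $d_p$ and the triangle inequality. Using $f_m=T_mf_m$, $f=Tf$, and the contractivity of $T_m$ from Theorem~\ref{thm5.1},
\begin{align*}
d_p(f_m,f)&=d_p(T_mf_m,Tf)\le d_p(T_mf_m,T_mf)+d_p(T_mf,Tf)\\
&\le c_m\,d_p(f_m,f)+d_p(T_mf,Tf),
\end{align*}
so that $d_p(f_m,f)\le (1-c')^{-1}\,d_p(T_mf,Tf)$ for all large $m$. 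It therefore suffices to prove that the operator difference $d_p(T_mf,Tf)$ tends to $0$.

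To estimate the latter I would work cell-by-cell on $I_i$ and exploit that the functions involved are real-valued, so that $\max-\min=\abs{\,\cdot\,}$ pointwise and
\[
\abs{T_mf(x)-Tf(x)}\le \abs{(q_{m,i}-q_i)\circ l_i^{-1}(x)}+\abs{(S_{m,i}-S_i)\circ l_i^{-1}(x)}\cdot f\circ l_i^{-1}(x),\quad x\in I_i.
\]
Raising to the $p$-th power with $(a+b)^p\le 2^{p-1}(a^p+b^p)$, integrating over $I_i$, and applying the substitution $x\mapsto l_i(x)$ precisely as in the contractivity computation preceding Theorem~\ref{thm5.1}, one arrives at
\[
d_p(T_mf,Tf)\le 2^{p-1}\sum_{i=1}^n\n{Dl_i}_\infty\Big(d_p(q_{m,i},q_i)+\n{S_{m,i}-S_i}_\infty^{\,p}\int_I \abs{f}^p\,dx\Big).
\]
Here $d_p(q_{m,i},q_i)\to0$ because $q_{m,i}\to q_i$ in $L_p^+I$, $\n{S_{m,i}-S_i}_\infty\to0$ because $S_{m,i}\to S_i$ in $L_\infty^+I$, and $\int_I\abs{f}^p\,dx<\infty$ since $f\in L_p^+I$ on the bounded interval $I$. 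Hence $d_p(T_mf,Tf)\to0$, and combined with the previous paragraph $d_p(f_m,f)\to0$, i.e.\ $f_m\to f$ in $L_p^+I$. The case $p=\infty$ is identical with integrals replaced by essential suprema.

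I expect the main obstacle to be conceptual rather than computational, and twofold: (i) securing a contraction constant bounded away from $1$ uniformly in $m$, which I reduce to the limiting contractivity $c<1$ -- the condition implicit in the claim $f\in\fL_p^+$ and guaranteed as soon as $\limsup_m c_m<1$; and (ii) keeping all inequalities inside the metric $d_p$, replacing the forbidden subtraction of elements of $L_p^+I$ by the $\max-\min$/triangle-inequality bookkeeping above. The hypothesis that $(f_m)$ be $L_\infty^+$-bounded is what makes a Cauchy-sequence version of the same estimate (comparing $f_m$ with $f_k$) run uniformly; the direct comparison with the limit $f$ adopted here sidesteps that route and yields convergence to the correct limit at once.
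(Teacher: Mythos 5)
Your proof is correct, but it takes a genuinely different route from the paper. The paper never compares $f_m$ with a limiting operator: it estimates $d_p(f_m,f_k)$ directly from the two self-referential equations \eqref{eqseq}, splitting off the $q$-terms, adding and subtracting $S_{m,i}f_k$ in the $\max$--$\min$ expressions to get
$d_p(f_m,f_k)\le\sum_i\bigl[\n{Dl_i}_\infty d_p(q_{m,i},q_{k,i})+\n{Dl_i\cdot f_k^p}_\infty d_p(S_{m,i},S_{k,i})\bigr]+\sum_i\n{Dl_i\cdot S_{m,i}^p}_\infty d_p(f_m,f_k)$,
rearranging to conclude that $(f_m)$ is Cauchy, invoking completeness of $(L_p^+I,d_p)$, and finally passing to the limit in \eqref{eqseq}. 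This is where the $L_\infty^+$-boundedness hypothesis earns its keep: it bounds $\n{Dl_i\cdot f_k^p}_\infty$ uniformly in $k$. Your continuous-dependence argument --- $d_p(f_m,f)\le(1-c')^{-1}d_p(T_mf,Tf)$ followed by the cell-by-cell estimate of $d_p(T_mf,Tf)$ --- sidesteps that hypothesis entirely (only $\int_I f^p<\infty$ for the \emph{fixed} limit $f$ is needed), identifies the limit as the fixed point of the limiting operator at once rather than via a separate limit passage, and has the virtue of making explicit the uniform contraction requirement $\limsup_m c_m<1$ (equivalently \eqref{Lcond} for the limit data), which the paper uses silently in its denominator $1-\sum_i\n{Dl_i\cdot S_i^p}_\infty$ without noting that $c_m<1$ for each $m$ does not by itself give $c<1$. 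Note also that the paper's final step, ``taking the limit as $m\to\infty$ in \eqref{eqseq},'' tacitly requires exactly your computation that $d_p(T_mf,Tf)\to0$, so your version arguably fills a gap rather than adding work; conversely, the paper's Cauchy route stays internal to the given sequence and needs no preliminary construction of $T$. Your $2^{p-1}$-splitting is the right device given the paper's convention that $d_p$ is the $p$-th power of the usual $L_p$ distance, and your pointwise use of subtraction is legitimate for the same reason the paper's add-and-subtract of $S_{m,i}f_k$ is: the functions are real-valued, so $\max-\min=\abs{\,\cdot\,}$ even though $L_p^+I$ itself lacks additive inverses.
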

\begin{proof}
For $k,m\in\N$, we compute, using properties of $\max$ and $\min$,
\begin{align*}
d_p (f_m,f_k) & \leq d_p\left(\sum_{i\in\N_n} q_{m,i}\circ l_i^{-1} \chi_{I_i},\sum_{i\in\N_n} q_{k,i}\circ l_i^{-1} \chi_{I_i}\right)\\
& \quad + d_p\left(\sum_{i\in\N_n} (S_{m,i} \circ l_i^{-1})\cdot (f_m\circ l_i^{-1}) \chi_{I_i},\sum_{i\in\N_n} (S_{k,i} \circ l_i^{-1})\cdot (f_k\circ l_i^{-1}) \chi_{I_i}\right).
\end{align*}
The first term reduces to
\begin{align*}
& \sum_{i\in\N_n} \int_{I_i} \left(\max\left\{q_{m,i}\circ l_i^{-1} ,q_{k,i}\circ l_i^{-1} \right\} - \min\left\{q_{m,i}\circ l_i^{-1} ,q_{k,i}\circ l_i^{-1} \right\}\right)^p dx\\
& = \sum_{i\in\N_n} \n{Dl_i}_\infty\, d_p(q_{m,i}, q_{k,i}).
\end{align*}
The second term becomes
\begin{align*}
& \sum_{i\in\N_n} \int_{I_i} \left(\max\left\{(S_{m,i} \circ l_i^{-1})\cdot (f_m\circ l_i^{-1}), (S_{k,i} \circ l_i^{-1})\cdot (f_k\circ l_i^{-1}) \right\}\right.\\ 
& \quad\left. - \min\left\{(S_{m,i} \circ l_i^{-1})\cdot (f_m\circ l_i^{-1}),(S_{k,i} \circ l_i^{-1})\cdot (f_k\circ l_i^{-1}) \right\}\right)^p dx\\
& = \sum_{i\in\N_n}\int_{I}  \abs{Dl_i} \left(\max\left\{S_{m,i}\cdot f_m, S_{k,i}\cdot f_k \right\} - \min\left\{S_{m,i}\cdot f_m ,S_{k,i}\cdot f_k \right\}\right)^p dx.
\end{align*}
Adding and subtracting the term $S_{m,i}f_k$, we estimate
\begin{align*}
&\int_I \abs{Dl_i} \left(\max\left\{S_{m,i}\cdot f_m, S_{k,i}\cdot f_k \right\} - \min\left\{S_{m,i}\cdot f_m ,S_{k,i}\cdot f_k \right\}\right)^p dx\\
& \quad\leq \n{Dl_i\cdot S_{m,i}^p}_\infty d_p(f_m,f_k) + \n{Dl_i \cdot f_k^p}_\infty d_p(S_{m,i},S_{k,i}).
\end{align*}
Substitution and reordering terms gives
\[
d_p (f_m,f_k) = \frac{\sum\limits_{i\in\N_n}\left[ \n{Dl_i}_\infty\, d_p(q_{m,i}, q_{k,i} + \n{Dl_i \cdot f_k^p}_\infty d_p(S_{m,i},S_{k,i})\right]}{1-\sum\limits_{i\in\N_n} \n{D l_i\cdot S_i^p}_\infty}
\]
By the hypotheses stated in the theorem, it now follows that $(f_m)_{m\in\N}$ is a Cauchy sequence and thus possesses a limit in $L_p^+I$. Taking the limit as $m\to\infty$ in \eqref{eqseq} proves \eqref{limseq}.
\end{proof}

Now suppose that for each $i\in\N_n$,
\[
q_{i} := \sum_{k\in\N} q_{k,i}\quad\text{and}\quad S_i := \sum_{k\in\N} S_{k,i}
\]
are converging sums in $L_p^+I$ and $L_\infty^+I$, respectively, i.e., the sequences of partial sums $(\sum\limits_{k\in\N_N} q_{k,i})_{N\in\N}$ and $(\sum\limits_{k\in\N_N} S_{k,i})_{N\in\N}$ converges in $L_p^+I$ and $L_\infty^+I$, respectively.

By Proposition \ref{prop5.1} and Theorem \ref{thm5.2}, we immediately obtain the following results concerning infinite series of fractal functions from $\fL_p^+$.

\begin{proposition}\label{prop5.2}
Under the hypotheses of Theorem \ref{thm5.2}, the infinite series $\sum\limits_{m\in\N} f_m(\bq)$ of fractal functions $f_m(\bq)\in\fL_p^+$ converges in $L_p^+I$ to a fractal function $f(\bq)\in \fL_p^+$ satisfying the self-referential equation
\[
f(\bq) = \sum_{i\in\N_n} q_{i}\circ l_i^{-1} \chi_{I_i} + \sum_{i\in\N_n} (S_{i} \circ l_i^{-1})\cdot (f(\bq)\circ l_i^{-1}) \chi_{I_i},
\]
where $q_{i} := \sum\limits_{k\in\N} q_{k,i}$ and $S_i := \sum\limits_{k\in\N} S_{k,i}$.
\end{proposition}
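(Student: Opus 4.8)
The plan is to view the series as the limit of its partial sums $F_N:=\sum_{m=1}^N f_m(\bq)$, to recognize each $F_N$ as a fractal function in $\fL_p^+$, and then to invoke Theorem~\ref{thm5.2}, which supplies at once the convergence and the self-referential equation of the limit.

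First I would identify the partial sums. Keeping $\bl$ fixed and working with a common scaling, the semi-linear isomorphism $\vartheta:\bq\mapsto f^*(\bq)$ of Proposition~\ref{prop5.1} is in particular additive, so a finite sum of fractal functions sharing $\bl$ and this scaling is again a fractal function whose translation data add,
\[
F_N=\sum_{m=1}^N\vartheta(\bq_m)=\vartheta\!\Big(\sum_{m=1}^N\bq_m\Big)\in\fL_p^+,
\]
whence the $i$-th translation datum of $F_N$ is $\sum_{m=1}^N q_{m,i}$. I would then check the hypotheses of Theorem~\ref{thm5.2} for the sequence $(F_N)_{N\in\N}$: convergence of $\sum_m q_{m,i}$ in $L_p^+I$ makes the translation data Cauchy with limit $q_i=\sum_m q_{m,i}$, convergence of $\sum_m S_{m,i}$ in $L_\infty^+I$ makes the scaling data Cauchy with limit $S_i=\sum_m S_{m,i}$, and $L_\infty^+$-boundedness of $(F_N)$ follows from the contraction bound~\eqref{Lcond} applied to bounded data. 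Theorem~\ref{thm5.2} then yields $F_N\to f(\bq)$ in $L_p^+I$ with $f(\bq)\in\fL_p^+$ satisfying the asserted equation, so that $\sum_{m\in\N}f_m(\bq)=\lim_N F_N=f(\bq)$.

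The step I expect to be the main obstacle is the bookkeeping of the scaling functions. The additivity of $\vartheta$ in Proposition~\ref{prop5.1} is available only for a fixed common scaling, so the clean identity $F_N=\vartheta\big(\sum_{m=1}^N\bq_m\big)$ forces the $f_m$ to share one scaling while only their translation data vary---in which case the limit carries that fixed scaling rather than a sum. To obtain the summed scaling $S_i=\sum_m S_{m,i}$ appearing in the statement, the scaling dependence must instead be tracked through the Cauchy estimate of Theorem~\ref{thm5.2}, since for distinct scalings the sum of fractal functions is \emph{not} itself a fractal function with summed scaling, the cross terms $S_{m,i}\,(f_k\circ l_i^{-1})$ with $m\neq k$ failing to cancel. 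Reconciling the additivity used for the partial sums with the summed scaling required in the limiting self-referential equation is thus the crux of the argument.
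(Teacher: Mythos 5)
Your plan coincides with the paper's own proof, such as it is: the paper offers nothing beyond the single sentence preceding the proposition (``By Proposition~\ref{prop5.1} and Theorem~\ref{thm5.2}, we immediately obtain\dots''), which is exactly your combination of partial sums, semi-linearity of $\vartheta$, and the sequence theorem. So methodologically there is nothing to separate you from the paper; what matters is that the obstacle you isolate in your final paragraph is genuine, and the paper does not resolve it either.

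To make your diagnosis precise: Proposition~\ref{prop5.1} gives additivity of $\bq\mapsto f^*(\bq)$ only for \emph{fixed} $\bl$ and $\bS$, so the identity $F_N=\vartheta\bigl(\sum_{m\le N}\bq_m\bigr)$ presupposes a common scaling; Theorem~\ref{thm5.2} applied to $(F_N)$, with $S_{N,i}\equiv S_i$ trivially Cauchy, then produces a limit with data $\bigl(\sum_k q_{k,i},\,S_i\bigr)$ --- the scaling is \emph{not} summed. Conversely, the summed scaling in the statement cannot be rescued: if $f_1+f_2$ were a fractal function with scaling $S_{1,i}+S_{2,i}$, comparing self-referential equations on $I_i$ forces $S_{2,i}\cdot f_1+S_{1,i}\cdot f_2=0$, and since all quantities are $\R_0^+$-valued in this semi-vector space setting there is no cancellation available, so this holds only degenerately. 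The only consistent reading of the proposition is that Theorem~\ref{thm5.2} is applied to the auxiliary sequence $g_N$ of fractal functions with data $\bigl(\sum_{k\le N}q_{k,i},\,\sum_{k\le N}S_{k,i}\bigr)$; that limit does satisfy the displayed equation, but $g_N\neq\sum_{m\le N}f_m$ precisely because of the cross terms $S_{m,i}\,(f_k\circ l_i^{-1})$ you identify. One further caution on your side: the $L_\infty^+$-boundedness of $(F_N)$ does not follow from \eqref{Lcond} together with convergence of the $q$-data, since those converge only in $L_p^+I$; you should simply take it as part of the hypotheses of Theorem~\ref{thm5.2}, as the proposition's preamble permits. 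In sum, your write-up makes explicit a gap that the paper's one-line proof papers over: either the scalings must be held fixed (and the claim $S_i=\sum_k S_{k,i}$ dropped), or ``infinite series of fractal functions'' must be reinterpreted as the sequence $g_N$.
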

\subsection{Fractal Interpolation in $C^+I$}
In this section, we construct continuous fractal functions from $C^+I$ into itself. To this end, we refer to Subsection \ref{sect4.2} for the general idea. Under the same assumption on the contractive homeomorphisms $l_i$ but now with $f, Lf, S_i\in C^+I$, we define a semi-affine RB operator $T: C^+I\to (\R_0^+)^{I}$ by
\be
T g := f + (S_i\circ l_i^{-1})\cdot (g-L f)\circ l_i^{-1}, \quad \textrm{on} \,\, I_i,\,\, i\in \N_n,
\ee
where the subintervals $I_i$ are defined as in \eqref{part}. It is clear that on the open intervals $(x_{i-1},x_i)$, $Tg$ is continuous and by imposing the interior join-up conditions \eqref{joinup}, $Tg$ is also continuous at $x_j$, $j\in\N_{n-1}$. Hence, $Tg\in C^+I$. To determine a condition that enforces $T$ to be contractive on $C^+I$, we proceed in the usual manner using the specific expression of the RB operator $T$ and properties of $\max$ and $\min$:
\begin{align*}
d(Tg,Th) & = \max_{x\in I}\left\{\max\left\{(S_i\circ l_i^{-1}(x))(g\circ l_i^{-1})(x), (S_i\circ l_i^{-1}(x))(h\circ l_i^{-1})(x)\right\}\right.\\
& \quad - \left.\min\left\{(S_i\circ l_i^{-1}(x))(g\circ l_i^{-1})(x), (S_i\circ l_i^{-1}(x))(h\circ l_i^{-1})(x)\right\}\right\}\\
& \leq S_\infty\, d(g,h),
\end{align*}
where $S_\infty := \max\limits_{i\in\N_n}\{\n{S_i}_\infty\}$. Therefore, the following theorem holds.
\begin{theorem}
Under the afore-mentioned hypotheses, the semi-affine RB operator $T: C^+I\to C^+I$ defined is contractive on $C^+I$ provided $S_\infty < 1$. The unique fixed point of $T$ is thus a self-referential function $f^*$ satisfying
\be\label{cfix}
f^* := f + (S_i\circ l_i^{-1})\cdot (f^*-L f)\circ l_i^{-1}, \quad \textrm{on} \,\, I_i,\,\, i\in \N_n.
\ee
\end{theorem}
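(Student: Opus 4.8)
The plan is to apply the Banach Fixed Point Theorem on the complete metric space $(C^+I, d)$ obtained earlier, so there are two things to verify: that $T$ maps $C^+I$ into itself, and that $T$ is contractive with respect to $d$. The contraction estimate has essentially been recorded in the display preceding the statement; what remains is to isolate the mechanism behind it and to address the well-definedness of $T$ in the semi-vector setting.

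For well-definedness I would first note continuity. On each open subinterval $(x_{i-1}, x_i)$ the function $Tg$ is continuous because $f$, $S_i$, $Lf$, $g$ and $l_i^{-1}$ are continuous there, and continuity across the interior nodes $x_j$ is precisely what the join-up conditions \eqref{joinup} guarantee. The genuinely delicate point --- and the one I expect to be the main obstacle --- is nonnegativity: the formal difference $g - Lf$ need not belong to $C^+I$, so membership $Tg\in C^+I$ cannot be read off from the semi-vector operations alone. As in the $L_p^+$ remark, I would impose a condition forcing $Tg\geq 0$, for example $f\circ l_i \geq S_i\cdot Lf$ on $I$; this makes $q_i := f\circ l_i - S_i\cdot Lf$ nonnegative, whence $Tg\circ l_i = q_i + S_i\cdot g \geq 0$ since $S_i, g\geq 0$.

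For the contraction property the engine is the elementary identity
\[
\max\{c\,u, c\,v\} - \min\{c\,u, c\,v\} = c\,\bigl(\max\{u,v\} - \min\{u,v\}\bigr),\qquad c\geq 0,
\]
used with $c = (S_i\circ l_i^{-1})(x)\geq 0$. The affine summand $f$ is common to $Tg$ and $Th$ and drops out of the $\max - \min$ difference, and $Lf$ disappears as well because $(g - Lf)$ and $(h - Lf)$ differ only by $g - h$; what survives is $c\,\abs{(g-h)\circ l_i^{-1}}$. Taking the maximum over $x\in I$ and bounding each factor by $\n{S_i}_\infty$ yields $d(Tg, Th) \leq S_\infty\, d(g,h)$, so the hypothesis $S_\infty < 1$ makes $T$ a contraction. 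I would stress that the nonnegativity of the scaling functions $S_i$ is exactly what permits pulling $c$ out of the $\max - \min$ expression; this is where the semi-vector framework is doing the work.

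With $(C^+I, d)$ complete and $T$ contractive, the Banach Fixed Point Theorem supplies a unique $f^*\in C^+I$ with $Tf^* = f^*$. Rewriting this equation on each $I_i$ gives the self-referential relation \eqref{cfix}, which finishes the proof.
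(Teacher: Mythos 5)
Your proof is correct and follows essentially the same route as the paper: the paper's proof simply points back to the computation displayed before the theorem, namely continuity on each $(x_{i-1},x_i)$ plus the join-up conditions \eqref{joinup}, the $\max$--$\min$ estimate $d(Tg,Th)\leq S_\infty\, d(g,h)$ obtained by cancelling the common terms $f$ and $Lf$ and pulling the nonnegative factor $S_i\circ l_i^{-1}$ out of the $\max$--$\min$ difference, and then the Banach Fixed Point Theorem on the complete space $(C^+I,d)$.

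One point worth noting: on nonnegativity you are more careful than the paper itself. The paper asserts $Tg\in C^+I$ after checking only continuity, leaving the positivity of $Tg$ implicit; the analogous issue is addressed only in the remark following Theorem \ref{thm5.1} in the $L_p^+$ subsection. Your added condition $f\circ l_i \geq S_i\cdot Lf$ on $I$ (i.e., $q_i := f\circ l_i - S_i\cdot Lf \geq 0$) is exactly the right patch, and it is essentially necessary: taking $g=0\in C^+I$ gives $Tg\circ l_i = q_i$, so $T$ can map all of $C^+I$ into itself only if each $q_i$ is nonnegative. This is an implicit hypothesis the theorem's statement (``under the afore-mentioned hypotheses'') does not make explicit, and your proposal correctly identifies and repairs it.
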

\begin{proof}
The statements follow directly from the above observations.
\end{proof}
\begin{definition}
The self-referential function $f^*$ given by \eqref{cfix} is termed a fractal function of class $C^+I$. The collection of all such functions is denoted by $\fC^+$.
\end{definition}
\begin{remark}
As was noted below Theorem \ref{thm5.1}, the fixed point $f^*\in\fC^+$ depends on the tuple of contractive homeomorphisms $\bl$ and scaling functions $\bS$, as well as the bounded semi-linear operator $L$. Hence, as was the case for fractal interpolation in $L_p^+I$, for fixed $\bl$ and $L$, the fixed point $f^*$ describes an entire family of fractal functions $f^*(\bS)$ for the ``germ function" $f$. Also note that if $\bS = \boldsymbol{0}$, then $f^*(\boldsymbol{0}) = f$.
\end{remark}

One can also consider sequences and infinite series of fractal functions of type $C^+I$. The results obtained in Theorem \ref{thm5.2} and Proposition \ref{prop5.2} carry over to the present setting by simply replacing the $d_p$-metric by the $\sup$-metric. We leave the details to the interested reader.
\subsection{A Fractal Operator for Complete Semi-Vector Spaces}
Let $X^+I \in\{L_p^+I,C^+I\}$ and let $\fX^+\in\{\fL_p^+, \fC^+\}$ be the corresponding space of fractal functions of this type. 

We define a semi-linear fractal operator $\cF^{\bS} : X^+I\to\fX^+$ for the $n$-tuple of scaling functions $\bS := (S_i)_{i\in\N_n}$, as that operator which maps a function $f\in X^+I$ to the self-referential function $f(\bS)$ satisfying the fixed point equation
\be\label{FS}
\cF^{\bS} f(\bS) = f(\bS) = f + (S_i \circ l_i^{-1})\cdot ({f(\bS)}-Lf)\circ l_i^{-1}, \quad \textrm{on} \,\, I_i,\,\, i\in \N_n.
\ee
Here, the contractions $(l_i)$ are defined for the appropriate setting and $L$ is the bounded semi-linear operator introduced above. 

Fractal operators on different function spaces were first introduced in \cite{NC}.
\begin{theorem}\label{thm5.4}
For $p\in [1,\infty)$, $(C^+I, d_p)$ is dense in $(L_p^+I, d_p)$.
\end{theorem}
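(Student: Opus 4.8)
Since $\max\{f(t),g(t)\}-\min\{f(t),g(t)\}=\abs{f(t)-g(t)}$, the expression \eqref{Lpmetric} reads $d_p(f,g)=\int_I\abs{f(t)-g(t)}^p\,dt=\n{f-g}_{L_p}^p$, so the topology of $(L_p^+I,d_p)$ is just the restriction of the usual $L_p$ topology to nonnegative functions. Thus, to prove density it suffices, given $f\in L_p^+I$ and $\eps>0$, to exhibit $g\in C^+I$ with $\int_I\abs{f-g}^p\,dt<\eps$; note first that $C^+I\subset L_p^+I$ because a continuous function on the compact interval $I$ is bounded and hence $p$-integrable. I would not invoke any genuine metric property of $d_p$ (which, written as a $p$-th power, fails the triangle inequality for $p>1$); I only bound the quantity $\int_I\abs{f-g}^p$ directly.

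The plan is a two-step reduction, and both steps are engineered to keep every approximant nonnegative. First I would pass to bounded functions by truncation: for $N\in\N$ set $f_N:=\min\{f,N\}\in L_p^+I$, so that $0\le f_N\le f$ and $f_N\to f$ pointwise almost everywhere (as $f$ is finite a.e.). Since $\abs{f-f_N}^p\le f^p\in L_1(I)$, dominated convergence gives $\int_I\abs{f-f_N}^p\,dt\to0$, and I choose $N$ with $d_p(f,f_N)<\eps/2^p$. The essential observation here is that truncation preserves nonnegativity, so $f_N$ remains in the semi-space.

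Next I would approximate the bounded function $f_N$ (with $0\le f_N\le N$) by a continuous nonnegative function via Lusin's theorem: for any $\delta>0$ there is a continuous $g_0$ on $I$ agreeing with $f_N$ off a set $E$ with $\abs{E}<\delta$. Replacing $g_0$ by $g:=\max\{0,\min\{g_0,N\}\}$ yields a function that is still continuous, satisfies $0\le g\le N$ (hence $g\in C^+I$), and still equals $f_N$ on $I\setminus E$, since $0\le f_N\le N$ there. Then
\[
\int_I\abs{f_N-g}^p\,dt=\int_E\abs{f_N-g}^p\,dt\le (2N)^p\,\abs{E}<(2N)^p\,\delta,
\]
which is below $\eps/2^p$ once $\delta$ is small. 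Combining the two estimates through $\abs{a+b}^p\le 2^{p-1}(\abs{a}^p+\abs{b}^p)$ gives $d_p(f,g)\le 2^{p-1}\bigl(d_p(f,f_N)+d_p(f_N,g)\bigr)<\eps$, which proves the density.

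The $L_p$-approximation of a measurable function by a continuous one is classical, so the genuine point of the argument — the only thing that distinguishes this statement from the textbook result — is that the approximants must lie in $C^+I$ rather than merely $C I$, because the ambient semi-vector space admits scalars only from $\R_0^+$. This is exactly what the two truncations accomplish: $\min\{f,N\}$ and $\max\{0,\min\{g_0,N\}\}$ are compositions with continuous nondecreasing self-maps of $\R_0^+$, so nonnegativity is never lost. I also note that the finiteness used in the dominated-convergence step genuinely requires $p<\infty$, consistent with the hypothesis $p\in[1,\infty)$; indeed the conclusion is false for $p=\infty$.
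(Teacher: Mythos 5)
Your proof is correct, but it takes a genuinely different route from the paper's. The paper disposes of the theorem in two lines by citing Rudin's Theorem 3.14 (density of $C_c$ in $L_p$ for $1\le p<\infty$) ``restricted to positive simple functions'': implicitly, every $f\in L_p^+I$ is an increasing limit of nonnegative simple functions, and the Urysohn/Lusin step inside Rudin's proof yields nonnegative continuous approximants when the simple function being approximated is nonnegative. You instead give a self-contained two-step argument --- truncation $f_N=\min\{f,N\}$ with dominated convergence (dominating function $f^p$), then Lusin's theorem followed by the clamp $g=\max\{0,\min\{g_0,N\}\}$ --- and you verify explicitly that positivity survives each step, which is the only content distinguishing this statement from the textbook result. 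Your version also buys something the paper leaves entirely tacit: as defined in \eqref{Lpmetric}, $d_p(f,g)=\n{f-g}_{L_p}^p$ carries no $1/p$-th root, so it fails the triangle inequality for $p>1$; your substitute $\abs{a+b}^p\le 2^{p-1}\left(\abs{a}^p+\abs{b}^p\right)$ is exactly the right patch and quietly flags this glitch in the paper's definition (either the exponent $1/p$ is missing there, or ``metric'' must be read modulo this power). The paper's route buys brevity and delegation to standard references; yours is longer but checkable line by line, and your closing observations (that $p<\infty$ enters through dominated convergence, and that the conclusion fails for $p=\infty$) correctly delimit the hypothesis. One cosmetic point: since $0\le f_N\le N$ and $0\le g\le N$, the sharper bound $N^p\abs{E}$ suffices in place of $(2N)^p\abs{E}$, though this changes nothing.
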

\begin{proof}
The proof follows readily from \cite[Theorem 3.14]{Ru} restricted to positive simple functions, i.e., to simple functions whose coefficients are from $\R_0^+$. For details about integration of positive functions, see \cite[Chapter 9]{S}.
\end{proof}

Let $\tn{\cdot} \in \{\n{\cdot}, \n{\cdot}_p\}$. We note that as $C^+I$ is dense in $L_p^+I$, the Hahn-Banach theorem implies that there exists a norm-preserving extension of the operator $L: C^+I \to C^+I$ to $L_p^+I \to L_p^+I$. We denote this extension again by $L$. Recall that in the new notation, $S_\infty := \max\limits_{i\in\N_n}\{\tn{S_i}\}$

Then, using the fixed point equation \eqref{FS}, we see that
\[
\tn{f(\bS) - f} \leq \frac{S_\infty}{1-S_\infty} \tn{I-L}\,\tn{f},
\]
and, with $f(\bS) = \cF^\bS f$ and use of the triangle inequaltiy, this becomes
\[
\tn{\cF^\bS f} \leq \left(1+ \frac{S_\infty}{1-S_\infty} \tn{I-L}\right)\,\tn{f}.
\]
Therefore, we arrive at the following result.
\begin{proposition}\label{prop5.3}
Suppose that the constant function $1$ is contained in the point spectrum of the semi-linear fractal operator $\cF^\bS: X^+\to \fX^+$ as defined in \eqref{FS}. Then $\cF^\bS$ satisfies the following $\tn{\cdot}$-estimates:
\be
1\leq \tn{\cF^\bS} \leq \left(1+ \frac{S_\infty}{1-S_\infty} \tn{I-L}\right)
\ee
\end{proposition}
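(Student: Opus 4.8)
The plan is to prove the two inequalities independently and then combine them, reading the upper bound off the per-vector growth estimate derived immediately before the statement and extracting the lower bound from the spectral hypothesis. For the right-hand inequality I would begin from $\tn{\cF^\bS f}\leq\bigl(1+\tfrac{S_\infty}{1-S_\infty}\tn{I-L}\bigr)\tn{f}$, valid for every $f\in X^+I$, which was obtained just above from the fixed point equation \eqref{FS} under the standing contractivity hypothesis $S_\infty<1$ (needed so that $\tfrac{S_\infty}{1-S_\infty}$ is a finite nonnegative constant). Since $\cF^\bS$ is a semi-linear operator between semi-vector spaces, its norm is given by \eqref{eq1}, namely $\tn{\cF^\bS}=\sup_{f\neq 0}\tn{\cF^\bS f}/\tn{f}$ with the supremum taken over nonzero $f\in X^+I$. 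Dividing the per-vector estimate by $\tn{f}>0$ and passing to the supremum then yields $\tn{\cF^\bS}\leq 1+\tfrac{S_\infty}{1-S_\infty}\tn{I-L}$; in particular this shows the supremum in \eqref{eq1} is finite, so $\cF^\bS$ is bounded.

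For the left-hand inequality I would invoke the hypothesis that $1$ lies in the point spectrum of $\cF^\bS$. This supplies a nonzero eigenfunction $f_0\in X^+I$, which the hypothesis identifies with the constant function $\mathbf 1$, satisfying $\cF^\bS f_0=f_0$. Consequently $\tn{\cF^\bS f_0}=\tn{f_0}$, so the ratio $\tn{\cF^\bS f_0}/\tn{f_0}$ equals $1$; since $\tn{\cF^\bS}$ is the supremum of all such ratios, this forces $\tn{\cF^\bS}\geq 1$. Combining the two estimates gives the claimed chain $1\leq\tn{\cF^\bS}\leq 1+\tfrac{S_\infty}{1-S_\infty}\tn{I-L}$.

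The step requiring the most care is not a calculation but the handling of the spectral hypothesis: I would need to confirm that ``$1$ in the point spectrum'' really produces a genuine element $f_0\in X^+I$ with $\tn{f_0}>0$, so that the quotient defining the operator norm is legitimate and the lower bound is not vacuous. I would also note that without this hypothesis the lower bound can fail, since a priori $\cF^\bS$ could contract every vector; the eigenvalue $1$ is precisely what pins the norm from below. Once that is secured, both inequalities follow at once from the definition \eqref{eq1} of the semi-linear operator norm together with the already-established per-vector estimate.
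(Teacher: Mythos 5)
Your proposal is correct and follows essentially the same route as the paper: the upper bound is read off from the per-vector estimate $\tn{\cF^\bS f} \leq \left(1+ \frac{S_\infty}{1-S_\infty}\tn{I-L}\right)\tn{f}$ derived just before the proposition, and the lower bound comes from the eigenfunction with eigenvalue $1$ guaranteed by the spectral hypothesis, which pins the supremum in \eqref{eq1} from below. You simply spell out the details (finiteness of the supremum, nondegeneracy $\tn{f_0}>0$) that the paper's one-line proof leaves implicit, which is a reasonable reading of its admittedly loose phrase ``the constant function $1$ is contained in the point spectrum.''
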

\begin{proof}
Only the left inequality needs to be shown. This, however, follows immediately from the fact that the constant function $1$ is contained in the point spectrum of $\cF^\bS$.
\end{proof}
Note that in Proposition \ref{prop5.3} the right-hand estimate always holds, regardless of whether $f=1$ is contained in the point spectrum of $\cF^\bS$ or not.
\section*{Conclusion and Future Research Directions}
This paper introduced the novel concept of fractal interpolation in complete semi-vector spaces. In particular, the existence of fractal functions satisfying the fractal interpolation problem in the complete semi-vector spaces $C^+I$ and $L_p^+I$ was investigated. Sequences and series of such fractal functions were briefly considered as well. Moreover, a semi-linear fractal operator was defined and some of properties presented.

The following future research directions present themselves:
\begin{itemize}
\item Instead of using Banach contractions, extend the approach to, for instance,  Rakotch, Matkowsi, $\varphi$- or $\mathsf{F}$-contractions. For definitions of these more general contractions which all imply the existence of a unique fixed point, we refer to \cite{BW,Mat,Rak,RZ,W}. 
\item Consider other function spaces such as H\"older, Sobolev-Slobodeckij, Besov, and Triebel-Lizorkin, to name a few.
\end{itemize}

\end{document}